\newtheorem{theorem}{Theorem}[section]
\newtheorem{corollary}[theorem]{Corollary}
\newtheorem{lemma}[theorem]{Lemma}
\newtheorem{proposition}[theorem]{Proposition}
\newtheorem{example}[theorem]{Example}
\newtheorem{definition}[theorem]{Definition}
\title{Counting generating spaces of matrices}
\author{Markus Reineke}
\begin{document}
\begin{abstract} We prove that generating subspaces of matrix rings over finite fields are counted by polynomials. We use this result to define and study two-variable versions of polynomials counting isomorphism classes of absolutely irreducible representations of free algebras.  \end{abstract}
\maketitle
\parindent0pt

\section{Introduction}
It is a common theme in many areas of algebra, notably linear algebra, group theory and representation theory, to count interesting classes of objects over finite fields, often (but not always) exhibiting polynomiality in the size of the field.\\[1ex]
In the present work, we solve this counting problem for spaces of square matrices which generate the matrix algebra (Theorem \ref{t1}), by determining counting polynomials $s_d^{(m)}(q)$, thus precisely quantifying the slogan that ``most'' at least two-dimensional subspaces of matrices are generating.\\[1ex]
We reduce the counting problem to counting isomorphism classes of absolutely irreducible representations of free algebras, for which counting polynomials $a_d^{(m)}(q)$ were determined in \cite{MR,R} (the proofs there being highly indirect).
\\[1ex]Despite being interesting in their own right, our counting polynomials are shown (Corollary \ref{mahler}) to appear naturally as ``Mahler-type'' coefficients in a two-variable version $a_d(q,u)$ of the counts of absolutely irreducible representations, which elucidate the dependence of the $a_d^{(m)}(q)$ on the rank $m$ of the free algebra, in the spirit of a similar result for Kac polynomials of quivers in \cite{HeR}.\\[1ex]
The proofs of these results are based on direct linear algebra constructions, natural scheme structures on the relevant classes of objects, the lambda-ring techniques of \cite{MR}, and identities between Gaussian binomial coefficients; all relevant techniques are recalled in Section \ref{pre}. The counting polynomials $s_d^{(m)}(q)$ are constructed in Section \ref{sdmq}; there we also list various small-dimensional cases and describe to which extent they are approximated by Gaussian binomial coefficients. The two-parameter counting polynomials $a_d(q,u)$ are constructed in Section \ref{adqu}. Again, we list small-dimensional examples, and we exhibit a natural factorization (Theorem \ref{fact}), and describe highest- and lowest-degree terms in the new variable.\\[2ex]
{\bf Acknowledgments:} The author would like to thank S.~Mozgovoy for several discussions related to the topics of this work.

\section{Prerequisites}\label{pre}

Let $k$ be a finite field, and let $\overline{k}$ be an algebraic closure of $k$. Denote by $M_d(k)$ the $k$-algebra of $d\times d$-matrices with entries in $k$. \\[1ex]
An $m$-tuple $$A_*=(A_1,\ldots,A_m)$$ in $M_d(k)$ defines a representation $(V,\rho)$ of the free algebra $$F^m(k)=k\langle x_1,\ldots,x_m\rangle$$ on $V=k^d$ and vice versa. Two such representations are isomorphic if and only if the corresponding matrix tuples $A_*$, $B_*$ are simultaneously conjugate, that is, $$B_k=gA_kg^{-1}$$ for $k=1,\ldots,m$, for some $g\in{\rm GL}_d(k)$.\\[1ex]
The representation $V$ is irreducible if there are no proper non-zero subspaces $U\subset V$ which are $F^m(k)$-stable, that is, such that $$A_k(U)\subset U$$ for $k=1,\ldots,m$. It is called absolutely irreducible if its base extension $\overline{V}=\overline{k}\otimes_kV$ is an irreducible representation of $F^m(\overline{k})$.

\begin{theorem} There exist polynomials $a_d^{(m)}(q)\in\mathbb{Z}[q]$ such that $a_d^{(m)}(|k|)$ equals the number of isomorphism classes of $d$-dimensional absolutely irreducible representations of $F^m(k)$ for all finite fields $k$.
\end{theorem}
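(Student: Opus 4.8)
The plan is to count absolutely irreducible representations by first counting all $d$-dimensional representations of $F^m(k)$ up to isomorphism, or rather the relevant groupoid cardinalities, and then to isolate the absolutely irreducible ones via a Möbius-type inversion. The total number of representations on $k^d$ is simply $|M_d(k)|^m = q^{md^2}$, since a representation is just an arbitrary $m$-tuple of matrices; this is manifestly a polynomial in $q$. The difficulty is that counting \emph{isomorphism classes} requires dividing by the action of $\mathrm{GL}_d(k)$ with its nontrivial stabilizers, so the naive count $q^{md^2}/|\mathrm{GL}_d(k)|$ is not an integer and not obviously polynomial.

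To handle the stabilizers cleanly, I would pass to the stack-theoretic or groupoid count, i.e. the \emph{mass} $\sum_{[V]} 1/|\mathrm{Aut}(V)|$, which by the orbit-counting formula equals $q^{md^2}/|\mathrm{GL}_d(k)|$ and is therefore a rational function of $q$. The key structural input is Schur's lemma: for an absolutely irreducible representation $V$, the endomorphism algebra is exactly $\overline{k}$-central, so $\mathrm{End}_{F^m(k)}(V) = k$ and hence $|\mathrm{Aut}(V)| = |k^\times| = q-1$. Thus the mass contributed by absolutely irreducible classes is precisely $a_d^{(m)}(q)/(q-1)$, and the problem reduces to separating this contribution from the total mass.

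The main step is therefore to express the total mass as a sum over isomorphism types organized by their decomposition structure, and to invert this relation. Every finite-dimensional representation decomposes via its socle filtration and Jordan--Hölder factors into irreducibles, but the arithmetic over a non-algebraically-closed field is subtle: an irreducible $k$-representation that is not absolutely irreducible becomes a direct sum of Galois-conjugate absolutely irreducibles after base change, and its endomorphism ring is a field extension $k'/k$. I would set up the standard plethystic/power-structure formalism (the lambda-ring techniques cited from \cite{MR}) in which the generating series for all representations factors as an Euler product over absolutely irreducibles and their Galois twists. Concretely, one writes
\begin{equation}
\sum_{d\ge 0}\frac{q^{md^2}}{|\mathrm{GL}_d(k)|}\,t^d = \prod_{e\ge 1}\prod_{[S]}\bigl(\text{local factor for }S\bigr),
\end{equation}
where the local factors are geometric-type series whose shape is dictated by $\mathrm{End}(S)$. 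Extracting the $a_d^{(m)}(q)$ from this product is the inversion problem.

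The hardest part will be showing that the resulting expressions, obtained after taking logarithms of the generating function and applying Galois descent to pass between representations over $k$ and over its extensions $k_n$, are genuinely \emph{polynomial} in $q$ rather than merely rational, with integer coefficients. The rationality is automatic from the product formula, but polynomiality requires that all the denominators $|\mathrm{GL}_d(k)| = q^{\binom{d}{2}}\prod_{i=1}^d (q^i-1)$ cancel; this cancellation is exactly the content that makes the theorem nontrivial, and I expect it to follow from a careful Möbius inversion over the lattice of field extensions combined with the identity $|\mathrm{Aut}(S)| = |k'|-1$ for absolutely irreducibles over $k'$. Given the remark in the introduction that the existing proofs are ``highly indirect,'' I anticipate that the clean polynomiality is precisely where the lambda-ring machinery of \cite{MR} does the essential work, converting the formal rational identity into an integrality statement.
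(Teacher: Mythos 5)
You should first note that the paper does not actually prove this theorem: it is imported wholesale from \cite{MR} and \cite{R}, and the only argument given in the text (the proof of the subsequent explicit formula) consists of quoting \cite[Theorem 5.1]{MR} and applying a twist operator. So the relevant comparison is with the strategy of those references, which your sketch does resemble in outline: groupoid counts, Schur's lemma giving $|\mathrm{Aut}(V)|=q-1$ for absolutely irreducibles, and a generating-function inversion. That part of your setup is sound.

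However, there are two genuine gaps. First, your displayed Euler product is not justified as stated: the category of finite-dimensional $F^m(k)$-modules is very far from semisimple, so the total mass $\sum_d q^{md^2}|\mathrm{GL}_d(k)|^{-1}t^d$ does not factor over irreducibles with ``geometric-type'' local factors in the naive way; one must account for extensions, and for the free algebra this is exactly where the twisted multiplication $t^d\circ t^e=q^{(m-1)de}t^{d+e}$ appearing in the paper's Theorem 2.2 enters. Without specifying the local factors and proving the product formula (or replacing it by the Harder--Narasimhan-type recursion of \cite{R}), the inversion step has nothing to invert. Second, and more seriously, the polynomiality and integrality of the resulting coefficients --- which you correctly identify as the entire content of the theorem --- is left as something you ``expect'' and ``anticipate.'' M\"obius inversion over field extensions together with $|\mathrm{Aut}(S)|=|k'|-1$ yields only a rational function of $q$ a priori; the cancellation of the denominators $\prod_i(q^i-1)$ is precisely what the lambda-ring/geometric machinery of \cite{MR} and \cite{R} is needed for, and gesturing at it does not constitute a proof. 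As it stands, your argument establishes (modulo the product formula) that the count is a fixed rational function of $|k|$, not that it is a polynomial with integer coefficients.
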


We give an explicit formula for these polynomials in terms of plethystic exponentials, following closely the notation of \cite{MR}. We consider the complete $\mathbb{Q}(q)$-algebra $R=\mathbb{Q}(q)[[t]]$ with its $t$-adic topology and denote its maximal ideal by $\mathfrak{m}=t\mathbb{Q}(q)[[t]]$. Define ${\rm Exp}:\mathfrak{m}\rightarrow1+\mathfrak{m}$ as the unique continuous homomorphism from the additive group of $\mathfrak{m}$ to the multiplicative group of $1+\mathfrak{m}$ such that $${\rm Exp}(q^it^d)=(1-q^it^d)^{-1}.$$ It is in fact an isomorphism, whose inverse is denoted by ${\rm Log}$. For fixed $m\geq 0$, we define a $\mathbb{Q}(q)$-linear continuous twist operator $T$ on $R$ by $$T(t^d)=(-1)^dq^{(1-m)d(d-1)/2}t^d,$$
and we consider the following series of $q$-hypergeometric type:
$$F(t)=\sum_{d\geq 0}\frac{q^{md(d+1)/2}t^d}{(1-q)\cdot\ldots\cdot(1-q^d)}\in R.$$

\begin{theorem}\label{mr} In the ring $R$, we have an identity
$$\sum_{d\geq 1}a_d^{(m)}(q)t^d=(1-q)\cdot{\rm Log}(T^{-1}F(t)^{-1}).$$
\end{theorem}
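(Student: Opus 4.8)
The plan is to obtain the absolutely irreducible counts from the groupoid (stacky) count of \emph{all} representations, via a ``twisted reciprocal followed by plethystic logarithm,'' which is the standard finite-field mechanism for isolating geometrically simple objects. First I would record the count of all representations. Since an $m$-tuple in $M_d(k)$ is an arbitrary point of $M_d(k)^m$, and two tuples give isomorphic representations exactly when they are ${\rm GL}_d(k)$-conjugate, the groupoid cardinality of the stack of $d$-dimensional representations is $q^{md^2}/|{\rm GL}_d(k)|$. Using $|{\rm GL}_d(k)|=q^{d(d-1)/2}\prod_{j=1}^d(q^j-1)$ together with $\prod_{j=1}^d(q^j-1)=(-1)^d\prod_{j=1}^d(1-q^j)$, a direct Gaussian-binomial manipulation matching the $q$-exponents shows that $Z(t):=\sum_{d\geq 0}\frac{q^{md^2}}{|{\rm GL}_d(k)|}t^d$ satisfies $T(Z(t))=F(t)$, i.e. $Z(t)=T^{-1}F(t)$. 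This already explains the roles of both $F$ and $T$: the twist exponent $(1-m)d(d-1)/2$ is precisely the Euler-form contribution $\langle d,d\rangle=(1-m)d^2$ of the $m$-loop quiver underlying $F^m(k)$.

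Second, and this is the core, I would prove the plethystic identity $T^{-1}\big(F(t)^{-1}\big)={\rm Exp}\big(\tfrac{1}{1-q}\sum_{d\geq 1}a_d^{(m)}(q)\,t^d\big)$, after which the statement follows by applying ${\rm Log}$ and multiplying by $(1-q)$. The object $T^{-1}(F^{-1})=T^{-1}\big((TZ)^{-1}\big)$ is the $T$-twisted reciprocal of the all-representations series $Z$, and the claim is that it is the plethystic exponential of the absolutely irreducible spectrum. The conceptual input is the $\lambda$-ring/power-structure formalism of \cite{MR}: reading $a_d^{(m)}(q)/(q-1)$ as the motive of the stack $[\mathrm{AbsIrr}_d/\mathbb{G}_m]$ of absolutely irreducible $d$-dimensional representations modulo scalar automorphisms --- which exhaust ${\rm Aut}$ by Schur's lemma, since absolute irreducibility forces ${\rm End}=k$, hence the factor $(1-q)$ --- the motivic ${\rm Exp}$ reconstructs, via its symmetric-power structure, all $k$-rational semisimple representations, producing automatically the Frobenius/Galois orbits of geometric simples and the ${\rm GL}$-automorphisms of their multiplicity spaces. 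The twisting and inversion $Z\mapsto TZ\mapsto(TZ)^{-1}\mapsto T^{-1}((TZ)^{-1})$ implement the Hall-algebra reciprocity (the antipode, encoding the hereditary $\mathrm{Ext}^1$-form) that inverts semisimplification, so that $(1-q){\rm Log}$ reads off exactly the absolutely irreducible counts.

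Third, I would carry out the concrete $q$-series verification that the twist and the inversion are compatible in every degree, i.e. that ${\rm Exp}\big(\tfrac{1}{q-1}\sum_d a_d^{(m)}t^d\big)$ and $T^{-1}F^{-1}$ are genuine multiplicative inverses in $R$; this is where identities between Gaussian binomial coefficients enter, through Euler's $q$-exponential identities and the $q$-binomial theorem applied to $F$ and $F^{-1}$. As a guiding sanity check I would specialize to $m=1$, where $T$ becomes the purely multiplicative sign operator $t^d\mapsto(-1)^dt^d$ and $F(t)=\prod_{i\geq 1}(1+q^it)$, so that $F(t)^{-1}=\sum_d\frac{(-q)^d}{(1-q)\cdots(1-q^d)}t^d$ and hence $T^{-1}F^{-1}=\prod_{i\geq 1}(1-q^it)^{-1}={\rm Exp}\big(\tfrac{q}{1-q}t\big)$. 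This yields $a_1^{(1)}(q)=q$ and $a_d^{(1)}(q)=0$ for $d\geq 2$, matching the count of absolutely irreducible $k[x]$-modules (the scalars, and nothing in higher dimension).

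The hard part will be the Galois-descent step inside the core identity: one must prove rigorously that the plethystic ${\rm Log}$ returns the count of \emph{absolutely} irreducible representations --- those remaining simple over $\overline{k}$ --- weighted by $1/(q-1)$, rather than the count of merely $k$-irreducible modules, whose endomorphism rings are arbitrary finite extensions of $k$ and which would contribute the wrong Adams/Frobenius-twisted terms. Equivalently, one must check that the $\mathbb{G}_m$-gerbe structure on $\mathrm{AbsIrr}_d$ contributes precisely $(1-q)$ with the correct sign, and that the normalization $(-1)^dq^{(1-m)d(d-1)/2}$ of $T$ is exactly the one making the reciprocity exact in all degrees. Pinning down these signs and the descent is the delicate, ``highly indirect'' content already flagged for \cite{MR,R}; the residual Gaussian-binomial bookkeeping is then routine.
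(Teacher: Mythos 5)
Your overall architecture is right: the theorem is equivalent to the single identity ${\rm Exp}\bigl(\tfrac{1}{1-q}\sum_{d\geq 1}a_d^{(m)}(q)t^d\bigr)=T^{-1}\bigl(F(t)^{-1}\bigr)$, and your preliminary computation that the stacky count $Z(t)=\sum_d q^{md^2}|{\rm GL}_d(k)|^{-1}t^d$ satisfies $T(Z)=F$ is correct, as is your $m=1$ sanity check. But there is a genuine gap at the center: your second and third steps are a program, not a proof. You assert that the twisted reciprocal of $Z$ is the plethystic exponential of the absolutely irreducible spectrum, justify it by a heuristic about gerbes, Hall-algebra reciprocity and Galois descent, and then explicitly defer "pinning down these signs and the descent" as the hard part. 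That deferred part \emph{is} the theorem; nothing in your write-up actually establishes it, and the Gaussian-binomial manipulations you call "routine bookkeeping" do not by themselves produce the descent from $k$-irreducible to absolutely irreducible counts.

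The paper does none of this work: its proof is a two-line formal deduction. It introduces the twisted multiplication $t^d\circ t^e=q^{(m-1)de}t^{d+e}$, quotes the identity
$$\sum_{d\geq 0}\frac{q^{(m-1)d^2}t^d}{(1-q^{-1})\cdots(1-q^{-d})}\circ{\rm Exp}\Bigl(\frac{1}{1-q}\sum_{d\geq 1}a_d^{(m)}(q)t^d\Bigr)=1$$
verbatim from \cite[Theorem 5.1]{MR} (this citation carries all of the representation-theoretic and descent content you are trying to reconstruct), and then observes that $T$ is a ring isomorphism from $(R,\circ)$ to $(R,\cdot)$, so applying $T$ turns the $\circ$-product into $F(t)\cdot T({\rm Exp}(\cdots))=1$ and the claimed formula follows by solving for the generating series. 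If you want a complete argument at the level of this paper, you should likewise cite \cite[Theorem 5.1]{MR} and carry out the elementary $T$-conjugation; reproving that theorem from scratch is a much larger undertaking (the introduction itself calls those proofs "highly indirect"), and your proposal does not carry it out.
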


\begin{proof} Defining a twisted $\mathbb{Q}(q)$-linear continuous multiplication on $R$ by
$$t^d\circ t^e=q^{(m-1)de}t^{d+e},$$
we have by \cite[Theorem 5.1]{MR}:
$$\sum_{d\geq 0}\frac{q^{(m-1)d^2}t^d}{(1-q^{-1})\cdot\ldots\cdot(1-q^{-m})}\circ{\rm Exp}(\frac{1}{1-q}\sum_{d\geq 1}a_d^{(m)}(q)t^d)=1.$$
Noting that $$T(t^d\circ t^e)=T(t^d)\cdot T(t^e),$$ applying $T$ to the previous equation, and resolving for the generating series of the $a_d^{(m)}(q)$, we arrive at the claimed formula.\end{proof}

Following \cite[Section 6]{HR}, if $X$ is a separated scheme of finite type over a finite field $k$, we say that $X$ has polynomial count if there exists a polynomial $P_X(q)\in\mathbb{C}[q]$ such that $P_X(|k'|)$ equals the number $|X(k')|$ of $k'$-rational points of $k$, for all finite extension fields $k\subset k'$. 

\begin{lemma}\label{pc} If $X$ has polynomial count, its counting polynomial $P_X(q)$ already belongs to $\mathbb{Z}[q]$, and is of degree $\dim X$.
\end{lemma}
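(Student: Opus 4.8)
The plan is to invoke the Grothendieck--Lefschetz trace formula to express the point counts as an exponential polynomial with integer coefficients, and then to compare this with the putative counting polynomial. Writing $q=|k|$, for every $n\geq 1$ the trace formula gives $|X(\mathbb{F}_{q^n})|=\sum_i(-1)^i{\rm tr}({\rm Frob}_q^n\mid H^i_c(X_{\overline{k}},\mathbb{Q}_\ell))$, so after collecting the distinct Frobenius eigenvalues $\lambda$ we obtain $|X(\mathbb{F}_{q^n})|=\sum_\lambda m_\lambda\lambda^n$ with net multiplicities $m_\lambda\in\mathbb{Z}$, the signs coming from the alternating sum over cohomological degrees. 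On the other hand, polynomial count means precisely that $P_X(q^n)=\sum_i c_iq^{in}$ equals this same quantity for all $n\geq 1$, where the $c_i$ are the a priori complex coefficients of $P_X$.

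First I would establish the integrality. The two finite expressions $\sum_i c_i(q^i)^n$ and $\sum_\lambda m_\lambda\lambda^n$ agree for all $n\geq 1$. Since the functions $n\mapsto\mu^n$ attached to distinct bases $\mu$ are linearly independent (a Vandermonde argument), and since the numbers $q^0,q^1,\ldots,q^{\deg P_X}$ are pairwise distinct because $q>1$, I may match the two sides base by base. This forces $c_i=m_{q^i}\in\mathbb{Z}$ for every $i$, with the convention that $m_\mu=0$ when $\mu$ is not an eigenvalue; hence $P_X\in\mathbb{Z}[q]$. It is worth noting that it is exactly the integrality of the $m_\lambda$, a consequence of the scheme structure via the trace formula, that does the work here: the bare fact that $P_X$ takes integer values on the geometric progression $\{q^n\}$ would not suffice, since for instance $q/2$ is integral on all powers of $q=2$.

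Then I would pin down the degree, setting $D=\dim X$ and assuming $X$ nonempty. Deligne's weight bounds give $|\lambda|\leq q^{i/2}\leq q^D$ for every eigenvalue $\lambda$ on $H^i_c$, while $H^i_c$ vanishes for $i>2D$; since $c_{\deg P_X}\neq 0$ the base $q^{\deg P_X}$ is a genuine eigenvalue, so $q^{\deg P_X}\leq q^D$ and therefore $\deg P_X\leq D$. For the reverse inequality I would appeal to the Lang--Weil estimates: along the subsequence of $n$ for which all top-dimensional geometrically irreducible components are fixed by ${\rm Frob}^n$, one has $|X(\mathbb{F}_{q^n})|=c_nq^{nD}(1+o(1))$ with $c_n\geq 1$, so the counts grow like $q^{nD}$ and cannot be matched by a polynomial of degree $<D$. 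Comparing with the leading behaviour $P_X(q^n)\sim c_{\deg P_X}q^{n\deg P_X}$ then yields $\deg P_X=D$.

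The main obstacle is this last lower bound on the degree: a priori the top-weight contributions of $H^{2D}_c$ could cancel in the alternating sum, so one must ensure that the leading term of size $q^{nD}$ genuinely survives for infinitely many $n$. This non-cancellation is exactly what Lang--Weil guarantees, the point being that the number of top-dimensional components fixed by a power of Frobenius is a positive integer along a suitable arithmetic progression. The upper bound $\deg P_X\leq D$ and the integrality, by contrast, are comparatively formal once the trace formula is in hand.
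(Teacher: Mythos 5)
Your argument is correct. The paper does not prove this lemma at all---it is quoted from \cite[Section 6]{HR} (Katz's appendix)---and your proof is essentially the one given there: the Grothendieck--Lefschetz trace formula plus linear independence of the functions $n\mapsto\lambda^n$ to identify the coefficients $c_i$ with integral net multiplicities of Frobenius eigenvalues, then the Weil II bounds $|\lambda|\leq q^{i/2}$ for the upper bound on the degree and a Lang--Weil (equivalently, top-dimensional $H^{2D}_c$) non-cancellation argument for the lower bound. Your remark that mere integer-valuedness on the progression $\{q^n\}$ would not suffice for integrality of the coefficients is exactly the right point to flag.
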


Later on, we will make frequent use of Gaussian binomial coefficients and identities related to them. For $a\in\mathbb{Z}$ and $b\in\mathbb{N}$, we define
$$\left[{a\atop b}\right]_q=\frac{(q^a-1)\cdot\ldots\cdot(q^{a-b+1}-1)}{(q^b-1)\cdot\ldots\cdot (q-1)},$$
which, for $a\geq 0$, is the counting polynomial for the Grassmannian ${\rm Gr}_b(W)$ of $b$-dimensional subspaces of an $a$-dimensional $k$-vector space $W$. In particular, we write $$[a]_q=\left[{a\atop 1}\right]_q.$$ The following identities hold for $a,a',a''\in\mathbb{N}$:
\begin{equation}\label{i1}\left[{a'+a'\atop b}\right]_q=\sum_{b=b'+b''}q^{(a'-b')b''}\left[{a'\atop b'}\right]_q\left[{a''\atop  b''}\right]_q,\end{equation}
\begin{equation}\label{i2}\left[{a\atop b}\right]_q=(-1)^bq^{ab-b(b-1)/2}\left[{-a+b-1\atop b}\right]_q,\end{equation}
\begin{equation}\label{i3}\sum_{b=0}^a(-1)^bq^{b(b-1)/2}\left[{a\atop b}\right]_q=\delta_{a,0},\end{equation}
\begin{equation}\label{i4}(u-1)\cdot\ldots\cdot(u-q^{a-1})=\sum_{b=0}^a(-1)^bq^{b(b-1)/2}\left[{a\atop b}\right]_qu^{a-b}.\end{equation}

Finally, we will need the following formula for the map ${\rm Log}:1+\mathfrak{m}\rightarrow\mathfrak{m}$:
$${\rm Log}=\Psi^{-1}\circ\log,$$
where $$\Psi^{-1}(P(q,t))=\sum_{i\geq 1}\frac{\mu(i)}{i}P(q^i,t^i)$$
for the number-theoretic Moebius function $\mu$.

\section{Polynomials counting generating subspaces}\label{sdmq}

We first translate the property of generation of a matrix algebra into representation theory.

\begin{lemma}\label{key} The tuple $A_*$ generates $M_d(k)$ as a (unital) $k$-algebra if and only if the representation $V$ is absolutely irreducible.
\end{lemma}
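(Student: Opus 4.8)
The plan is to reduce the statement to the classical Burnside theorem over the algebraically closed field $\overline{k}$, by way of a base-change dimension count. First I would observe that the unital $k$-subalgebra $A\subseteq M_d(k)$ generated by $A_1,\ldots,A_m$ is precisely the image of the algebra homomorphism $\rho:F^m(k)\to M_d(k)$ defining the representation $(V,\rho)$; consequently $A_*$ generates $M_d(k)$ if and only if $\rho$ is surjective, that is, if and only if $\dim_k A=d^2$.

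Next I would pass to the algebraic closure. Under the canonical isomorphism $\overline{k}\otimes_k M_d(k)\to M_d(\overline{k})$, the subspace $\overline{A}=\overline{k}\otimes_k A$ maps onto the unital $\overline{k}$-subalgebra of $M_d(\overline{k})$ generated by $A_1,\ldots,A_m$, which is exactly the image of the representation $\overline{V}$ of $F^m(\overline{k})$. Since base extension is exact, $\dim_{\overline{k}}\overline{A}=\dim_k A$, and because $\dim M_d(k)=d^2=\dim M_d(\overline{k})$, it follows that $A=M_d(k)$ if and only if $\overline{A}=M_d(\overline{k})$. This descent reduces the claim to the case of an algebraically closed base field.

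The key step, and the main input of the whole argument, is Burnside's theorem: for a subalgebra $B$ of $M_d(K)$ with $K$ algebraically closed, one has $B=M_d(K)$ if and only if the natural module $K^d$ is simple over $B$. I would apply this with $K=\overline{k}$, $B=\overline{A}$, and module $\overline{V}$. The $F^m(\overline{k})$-action on $\overline{V}$ factors through $\overline{A}$ and has exactly the same stable subspaces, so $\overline{V}$ is irreducible as an $F^m(\overline{k})$-representation if and only if it is simple as an $\overline{A}$-module. Chaining the equivalences gives: $A_*$ generates $M_d(k)$ iff $\overline{A}=M_d(\overline{k})$ iff $\overline{V}$ is simple over $\overline{A}$ iff $\overline{V}$ is an irreducible $F^m(\overline{k})$-representation iff $V$ is absolutely irreducible.

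The main obstacle is Burnside's theorem itself, which is precisely where algebraic closure is indispensable: over a non-closed field an irreducible module may have a commutant strictly larger than the base field, so the generated algebra can be a proper subalgebra (a matrix algebra over a nontrivial division ring) rather than all of $M_d(k)$ — exactly the phenomenon separating mere irreducibility from absolute irreducibility, and the reason the statement must be phrased with the latter. Should a self-contained argument be preferred, I would reprove Burnside via the Jacobson density theorem, using that the commutant of $\overline{V}$ is $\overline{k}$ by Schur's lemma together with algebraic closedness, so that $\overline{A}$ acts densely and hence, by finite-dimensionality, equals $M_d(\overline{k})$.
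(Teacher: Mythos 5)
Your proof is correct and follows essentially the same route as the paper: reduce to the algebraically closed field and invoke Burnside's theorem (equivalently, Jacobson density plus Schur's lemma, which is exactly what the paper cites), with the return to $k$ handled by a base-change dimension count that the paper leaves implicit. No gaps.
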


\begin{proof} If the tuple $A_*$ generates $M_d(k)$ as $k$-algebra, it also generates $M_d(\overline{k})$ as $\overline{k}$-algebra. In particular, a subspace $U\subset \overline{k}$ fixed by the $A_k$ is fixed by every matrix, thus trivial. Thus the representation defined by $A_*$ is absolutely irreducible. Conversely, assume that the $d$-dimensional representation $V$ is absolutely irreducible, thus $\overline{V}$ is irreducible. Its endomorphism ring thus reduces to the scalars, and by Jacobson density, the map $F^m(\overline{k})\rightarrow{\rm End}_{\overline{k}}(\overline{V})$ is surjective. Consequently, also $$F^m(k)\rightarrow{\rm End}(V) \simeq M_d(k)$$
is surjective, proving the generation property.\end{proof}

 \begin{definition} For $d\geq 1$ and $m\leq d^2$ we define $S_d^{(m)}(k)$ as the set of $m$-dimensional subspaces of $M_d(k)$ which generate $M_d(k)$ as a unital $k$-algebra.
 \end{definition}
 
 \begin{theorem}\label{t1} There exist polynomials $s_d^{(m)}(q)\in\mathbb{Z}[q]$ such that $s_d^{(m)}(|k|)$ equals the cardinality of $S_d^{(m)}(k)$, for all finite fields $k$.
 \end{theorem}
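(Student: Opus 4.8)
The plan is to reduce the count to absolutely irreducible representations, whose counting polynomials $a_d^{(m)}$ are already available. The starting observation is that the unital subalgebra generated by a set of matrices depends only on its linear span, so by Lemma~\ref{key} an $m$-tuple $A_*$ of linearly independent matrices generates $M_d(k)$ precisely when its span belongs to $S_d^{(m)}(k)$, equivalently when the associated representation is absolutely irreducible. Since an $m$-dimensional subspace has exactly $|\mathrm{GL}_m(k)|$ ordered bases, the number of linearly independent absolutely irreducible $m$-tuples equals $|S_d^{(m)}(k)|\cdot|\mathrm{GL}_m(k)|$. Thus it suffices to count absolutely irreducible tuples while keeping track of the dimension of their span.

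Next I would count the set $\mathcal{A}_m(k)$ of \emph{all} absolutely irreducible $m$-tuples, dropping the independence condition. The group $\mathrm{GL}_d(k)$ acts on $\mathcal{A}_m(k)$ by simultaneous conjugation, with orbits the isomorphism classes; there are $a_d^{(m)}(|k|)$ of these. By Schur's lemma an absolutely irreducible representation has endomorphism algebra equal to $k$, so the stabilizer of each tuple is the subgroup $k^\times$ of nonzero scalars, of order $|k|-1$. Orbit--stabilizer then gives
$$|\mathcal{A}_m(k)|=a_d^{(m)}(|k|)\cdot\frac{|\mathrm{GL}_d(k)|}{|k|-1},$$
a polynomial in $|k|$ since $|\mathrm{GL}_d(k)|/(|k|-1)$ is. I would then stratify $\mathcal{A}_m(k)$ according to the dimension $m'$ of the span $U=\mathrm{span}(A_*)$. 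Each such $U$ is itself a generating subspace (again by Lemma~\ref{key}), the fibre over it consists of all $m$-tuples spanning $U$, and the number of these is the count $\beta(m,m')=\left[{m\atop m'}\right]_q|\mathrm{GL}_{m'}(k)|$ of surjections $k^m\twoheadrightarrow k^{m'}$, a polynomial in $q=|k|$. This produces the triangular identity
$$|\mathcal{A}_m(k)|=\sum_{m'=0}^{m}|S_d^{(m')}(k)|\cdot\beta(m,m'),$$
valid over every finite field.

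Because $\beta(m,m)=|\mathrm{GL}_m(k)|\neq0$, I would solve this system recursively in $m$, obtaining a fixed rational function $R\in\mathbb{Q}(q)$ with $R(|k'|)=|S_d^{(m)}(k')|$ for all finite extensions $k'$ of $k$; the inputs $|\mathcal{A}_m|$ and $\beta(m,m')$ are polynomials, so the recursion stays within $\mathbb{Q}(q)$. As $R$ assumes nonnegative integer values at the prime powers, which are unbounded, a standard argument forces $R$ to be a polynomial with rational coefficients. Finally, generation is an open condition on the Grassmannian $\mathrm{Gr}_m(M_d)$ --- the non-vanishing of the minors expressing that products of elements of $U$ of bounded degree span all of $M_d$ --- so $S_d^{(m)}$ is an open, hence separated finite-type, subscheme. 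It therefore has polynomial count with counting function $R$, and Lemma~\ref{pc} upgrades $R$ to the desired $s_d^{(m)}(q)\in\mathbb{Z}[q]$, of degree $\dim S_d^{(m)}$.

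I expect two points to require the most care. The first is the stabilizer computation: it is essential to use absolute irreducibility rather than mere irreducibility, so that the endomorphism algebra remains $k$ after base change to $\overline{k}$ and the stabilizer is exactly $k^\times$; without this the orbit sizes, and hence $|\mathcal{A}_m|$, would not be uniform in $|k|$. The second and more delicate point is the passage from the a priori rational function $R$ to a genuine polynomial with integer coefficients: the triangular inversion only lands in $\mathbb{Q}(q)$, and it is the combination of integrality of the point counts at all prime powers with the finite-type scheme structure and Lemma~\ref{pc} that clears the denominators and pins down the coefficients and the degree.
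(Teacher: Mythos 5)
Your proposal is correct and follows essentially the same route as the paper: stratify the absolutely irreducible $m$-tuples by the dimension of their span, count each fibre (your surjection count $\beta(m,m')$ equals the paper's $(q^m-1)\cdots(q^m-q^{r-1})$), use the scalar-stabilizer orbit count to express the total via $a_d^{(m)}(q)$, invert the resulting triangular system over $\mathbb{Q}(q)$, and invoke the open-subscheme structure together with Lemma~\ref{pc} to land in $\mathbb{Z}[q]$. The only differences are cosmetic (you recurse in $m$ rather than writing the full triangular system, and you spell out the rational-function-to-polynomial step that the paper leaves implicit).
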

 
\begin{proof} For an $n$-dimensional $k$-vector space $W$ and positive integers $m$ and $r$, define $W^m(r)\subset W^m$ as the subset of tuples $(w_1,\ldots,w_m)$ in $W$ which span an $r$-dimensional subspace. Then $W^m$ is the disjoint union of the $W^m(r)$ for $0\leq r\leq m$. We have a map $$p:W^m(r)\rightarrow {\rm Gr}_{r}(W)$$ to the set of $r$-dimensional subspaces of $W$, mapping a tuple to the subspace generated by it. This map is surjective and equivariant with respect to the natural ${\rm GL}(W)$-action, with fibre over a fixed $U\subset W$ being isomorphic to $U^{m}(r)$, the set of generating $m$-tuples in the $r$-dimensional space $U$. Choosing a basis of $U$, this can be identified with the set of $r\times m$-matrices over $k$ of rank $r$. The number of such matrices, for $q=|k|$, equals
 $$(q^m-1)\cdot\ldots\cdot(q^m-q^{r-1}).$$
 We apply this to the $k$-vector space $W=M_d(k)$, thus we have a disjoint union
 $$M_d(k)^m=\bigcup_{r=0}^{d^2}M_d(k)^m(r).$$
 Denoting by $M_d(k)_{\rm ai}^m\subset M_d(k)^m$ the set of $m$-tuples defining an absolutely irreducible tuple (equivalently, by Lemma \ref{key}, generating the matrix ring), this restricts to a decomposition
 $$M_d(k)_{\rm ai}^m=\bigcup_{r=0}^{d^2}M_d(k)_{\rm ai}^m(r),$$
 where of course
 $$M_d(k)_{\rm ai}^m(r)=M_d(k)_{\rm ai}^m\cap M_d(k)^m(r).$$
 Under the map $p: M_d(k)^m(r)\rightarrow {\rm Gr}_r(M_d(k))$, the subset $M_d(k)^m_{\rm ai}(r)$ maps precisely to $S_d^{(r)}(k)$ as long as $r\leq m$. We thus have:
 \begin{equation}\label{eqn0}|M_d(k)_{\rm ai}^m|=\sum_{r=0}^{d^2}(q^m-1)\cdot\ldots\cdot(q^m-q^{r-1})\cdot|S_d^{(r)}(k)|.\end{equation}
The automorphism group of an absolutely irreducible representation reducing to the scalars $k^*$, its simultaneous conjugacy class under the ${\rm GL}_d(k)$-action has cardinality
$$|{\rm PGL}_d(k)|=(q^d-1)\cdot\ldots\cdot(q^d-q^{d-1})/(q-1).$$
Thus we have $$|M_d(k)_{\rm ai}^m|=|{\rm PGL}_d(k)|\cdot a_d(|k|),$$
and in particular, $|M_d(k)_{\rm ai}^m|$ behaves polynomially in $k$.  The equations (\ref{eqn}) for $m=0,\ldots,d^2$, form  a system of linear equations determining the $|S_d^{(m)}(k)|$ from the $|M_d(k)_{\rm ai}^m|$, with the coefficient matrix being triangular since the coefficient vanishes for $r>m$. The diagonal coefficients equal $$(q^m-1)\cdot\ldots\cdot(q^m-q^{m-1})\not=0,$$ thus the  cardinalities $|S_d^{(m)}(k)|$ can be expressed as $\mathbb{Q}(q)$-linear combinations of the cardinalities $|M_d(k)_{\rm ai}^m|$. This proves that the claimed $s_d^{(m)}(q)$ exists as an element of $\mathbb{Q}(q)$. Since it assumes only integer values, we have $s_d^{(m)}(q)\in\mathbb{Q}[q]$.\\[1ex]
To prove that it is already an element of $\mathbb{Z}[q]$, we argue geometrically and view $M_d(k)^m$ and all related sets as schemes over $k$, compare \cite[Section 6]{R}. Then $M_d(k)^m_{\rm ai}$ is a Zariski-open subset, ${\rm Gr}_r(M_d(k))$ is a projective scheme over $k$, and again $S_d^{(r)}(k)$ is a Zariski-open subset. By Lemma \ref{pc}, we thus find $s_d^{(m)}(q)\in\mathbb{Z}[q]$.\end{proof}

For further computations, we note the linear equation relating the two classes of counting polynomials as a separate result:

\begin{corollary}\label{eqn} For all $m\geq 0$, we have 
$$a_d^{(m)}(q)=\frac{q-1}{(q^d-1)\cdot\ldots\cdot(q^d-q^{d-1})}\cdot\sum_{r=0}^{d^2}(q^m-1)\cdot\ldots\cdot(q^m-q^{r-1})s_d^{(r)}(q).$$
\end{corollary}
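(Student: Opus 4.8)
The plan is simply to re-read the two expressions for $|M_d(k)^m_{\rm ai}|$ that were already derived inside the proof of Theorem~\ref{t1} and to solve the resulting equation for $a_d^{(m)}(q)$. The corollary is a bookkeeping rearrangement of that proof rather than a fresh argument, so I would lean on the geometric and polynomiality content already established there.

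First I would recall the stratification obtained in the proof of Theorem~\ref{t1},
$$|M_d(k)_{\rm ai}^m|=\sum_{r=0}^{d^2}(q^m-1)\cdot\ldots\cdot(q^m-q^{r-1})\cdot|S_d^{(r)}(k)|,$$
which comes from grouping the absolutely irreducible $m$-tuples according to the dimension $r$ of the subspace they span: for $r\leq m$ the span map sends such a tuple to a generating subspace $U\in S_d^{(r)}(k)$, and the fibre over $U$ consists of the rank-$r$ generating $m$-tuples in $U$, of which there are $(q^m-1)\cdot\ldots\cdot(q^m-q^{r-1})$.

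Second, I would invoke the orbit count also noted in that proof. Since the automorphism group of an absolutely irreducible representation reduces to the scalars $k^*$, the ${\rm GL}_d(k)$-stabilizer of any such tuple is exactly $k^*$, so each isomorphism class is a single ${\rm GL}_d(k)$-orbit of cardinality $|{\rm GL}_d(k)|/|k^*|=|{\rm PGL}_d(k)|$. This gives
$$|M_d(k)_{\rm ai}^m|=|{\rm PGL}_d(k)|\cdot a_d^{(m)}(|k|),\qquad |{\rm PGL}_d(k)|=\frac{(q^d-1)\cdot\ldots\cdot(q^d-q^{d-1})}{q-1}.$$

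Finally, equating the two expressions and dividing by $|{\rm PGL}_d(k)|$ yields the claimed identity after substituting the counting polynomials $s_d^{(r)}(q)$ for $|S_d^{(r)}(k)|$, since the prefactor $(q-1)/((q^d-1)\cdot\ldots\cdot(q^d-q^{d-1}))$ is precisely $|{\rm PGL}_d(k)|^{-1}$. There is no genuine obstacle: the only point requiring care is the stabilizer computation, namely that absolute irreducibility forces the endomorphism algebra, hence the automorphism group, to be the scalars, so that the orbit size is uniformly $|{\rm PGL}_d(k)|$ independent of the class. It is exactly this uniformity that makes the passage from the tuple count to the isomorphism-class count a clean division and renders the corollary an immediate consequence of Theorem~\ref{t1}.
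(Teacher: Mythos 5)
Your proposal is correct and matches the paper exactly: the paper states this corollary without a separate proof precisely because it is the combination of the two expressions for $|M_d(k)^m_{\rm ai}|$ already derived in the proof of Theorem \ref{t1} (the stratification by the dimension of the span, and the orbit count via the scalar stabilizer), divided by $|{\rm PGL}_d(k)|$. Nothing is missing.
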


It is not difficult to resolve this recursion.

\begin{lemma}\label{inv} We have
$$s_d^{(m)}(q)=\sum_{r=1}^m\frac{(-1)^{m-r}q^{r(r+1)/2-mr+d(d-1)/2}(q^d-1)\ldots\ldots\cdot(q^2-1)}{(q^r-1)\cdot\ldots\cdot(q-1)\cdot(q^{m-r}-1)\cdot\ldots\cdot(q-1)}a_d^{(r)}(q).$$
\end{lemma}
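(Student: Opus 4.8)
The plan is to invert the triangular linear system of Corollary \ref{eqn} directly, after recognizing it as a $q$-binomial transform. First I would expose the Gaussian binomial coefficient hidden in the coefficient there: factoring a power of $q$ out of each factor gives
$$(q^m-1)\cdot\ldots\cdot(q^m-q^{r-1})=q^{r(r-1)/2}(q^m-1)(q^{m-1}-1)\cdot\ldots\cdot(q^{m-r+1}-1)=q^{r(r-1)/2}\phi_r\left[{m\atop r}\right]_q,$$
where $\phi_r=(q^r-1)\cdot\ldots\cdot(q-1)$ is the $q$-factorial (with $\phi_0=1$). Since the normalizing factor equals $\prod_{i=0}^{d-1}(q^d-q^i)=q^{d(d-1)/2}\phi_d$, Corollary \ref{eqn} reads
$$a_d^{(m)}(q)=\frac{q-1}{q^{d(d-1)/2}\phi_d}\sum_{r=0}^{m}q^{r(r-1)/2}\phi_r\left[{m\atop r}\right]_q s_d^{(r)}(q),$$
the sum terminating at $r=m$ because the coefficient vanishes for $r>m$.

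Next I would absorb the diagonal factors into a new unknown. Setting $f_r=\frac{q-1}{q^{d(d-1)/2}\phi_d}q^{r(r-1)/2}\phi_r s_d^{(r)}(q)$, the relation becomes the clean $q$-binomial transform $a_d^{(m)}(q)=\sum_{r}\left[{m\atop r}\right]_q f_r$. Such a transform is inverted by
$$f_m=\sum_{r=0}^m(-1)^{m-r}q^{(m-r)(m-r-1)/2}\left[{m\atop r}\right]_q a_d^{(r)}(q),$$
which rests on the orthogonality relation
$$\sum_{r=k}^m(-1)^{m-r}q^{(m-r)(m-r-1)/2}\left[{m\atop r}\right]_q\left[{r\atop k}\right]_q=\delta_{m,k}.$$
This relation I would derive by applying the trinomial identity $\left[{m\atop r}\right]_q\left[{r\atop k}\right]_q=\left[{m\atop k}\right]_q\left[{m-k\atop r-k}\right]_q$ (immediate from the factorial expression $\left[{a\atop b}\right]_q=\phi_a/(\phi_b\phi_{a-b})$), pulling $\left[{m\atop k}\right]_q$ outside, and substituting $b=m-r$ so that the inner sum becomes exactly identity (\ref{i3}) for $a=m-k$.

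Finally I would substitute back. Solving for $s_d^{(m)}(q)$ yields
$$s_d^{(m)}(q)=\frac{q^{d(d-1)/2}\phi_d}{(q-1)\,q^{m(m-1)/2}\phi_m}\sum_{r=0}^m(-1)^{m-r}q^{(m-r)(m-r-1)/2}\left[{m\atop r}\right]_q a_d^{(r)}(q),$$
and the claimed formula follows by simplifying: $\phi_d/(q-1)=(q^d-1)\cdot\ldots\cdot(q^2-1)$, the quotient $\left[{m\atop r}\right]_q/\phi_m=1/(\phi_r\phi_{m-r})$ supplies the stated denominator $(q^r-1)\cdot\ldots\cdot(q-1)\cdot(q^{m-r}-1)\cdot\ldots\cdot(q-1)$, and the powers of $q$ collect through the identity $d(d-1)/2-m(m-1)/2+(m-r)(m-r-1)/2=d(d-1)/2+r(r+1)/2-mr$.

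The conceptual core is the $q$-binomial inversion, and its orthogonality relation is the step most in need of care; once (\ref{i3}) is brought to bear via the trinomial identity, the rest is bookkeeping of $q$-powers and $q$-factorials. The one remaining subtlety is the lower summation limit: the inversion naturally produces an $r=0$ term, equal to $(-1)^m q^{d(d-1)/2}(q^d-1)\cdot\ldots\cdot(q^2-1)a_d^{(0)}(q)/\phi_m$, but since the $0$-generator free algebra $F^0(k)=k$ admits an absolutely irreducible $d$-dimensional representation only for $d=1$, we have $a_d^{(0)}(q)=0$ for $d\geq 2$, so this term drops and the sum may be started at $r=1$ as stated.
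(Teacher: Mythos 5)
Your proof is correct and is in substance the same as the paper's: both arguments come down to collapsing a double sum via the alternating-sum identity (\ref{i3}). The only cosmetic difference is direction — you package the relation of Corollary \ref{eqn} as a $q$-binomial transform and invert it (proving the orthogonality relation from the trinomial identity plus (\ref{i3})), whereas the paper substitutes Corollary \ref{eqn} into the claimed right-hand side and applies (\ref{i3}) to verify it collapses to $s_d^{(m)}(q)$; your bookkeeping of the $q$-powers and the $r=0$ term is accurate.
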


\begin{proof} Using the formula of Corollary \ref{eqn}, the right hand side of the claimed equality can be rewritten as
$$\sum_{s\leq r}\frac{(-1)^{m-r}q^{r(r+1)/2-mr+s(s-1)/2}}{(q^{m-r}-1)\cdot\ldots\cdot(q-1)\cdot(q^{r-s}-1)\cdot\ldots\cdot(q-1)}
s_d^{(s)}(q)=$$
$$=\sum_sq^{s(s-1)/2}\sum_{t=0}^{m-s}\frac{(-1)^tq^{t(t-1)/2-m(m-1)/2}}{(q^t-1)\cdot\ldots\cdot(q-1)\cdot(q^{m-s-t}-1)\cdot\ldots\cdot(q-1)}s_d^{(s)}(q)$$
with the substitution $t=m-r$. Applying identity (\ref{i3}), we see that the inner sum vanishes unless $s=m$, and the term simplifies to $s_d^{(m)}(q)$, as claimed.\end{proof}
 
 \begin{example}  We have $s_1^{(1)}(q)=1$ trivially, and $s_d^{(1)}(q)=0$ for all $d\geq 2$, since a single matrix admits an eigenvector over $\overline{k}$, and thus a proper subrepresentation (equivalently, since $a_d^{(1)}(q)=0$). The previous recursion has a simple expression for $m=2,3$:
 $$s_d^{(2)}(q)=\frac{|{\rm PGL}_d(k)|}{(q^2-1)(q^2-q)}\cdot a_d^{(2)}(q).$$
 
 $$s_d^{(3)}(q)=\frac{|{\rm PGL}_d(k)|}{(q^3-1)(q^3-q)(q^3-q^2)}\cdot( a_d^{(3)}(q)-(q^2+q+1)a_d^{(2)}(q)).$$
 \end{example}
 
\begin{example}  We have
$$s_2^{(2)}=q^4,\; s_2^{(3)}=q^3+q^2,\, s_2^{(4)}=1.$$
For $d=3$, we find

\begin{eqnarray*}
s_3^{(2)}&=&q^{14}+q^{13}-q^{11}-q^{10},\\
s_3^{(3)}&=&q^{18}+q^{17}+2q^{16}+3q^{15}+2q^{14}+q^{13}-2q^{11}-3q^{10}-2q^9-q^8,\\
s_3^{(4)}&=&q^{20} + q^{19} + 2q^{18} + 3q^{17} + 5q^{16} + 6
q^{15} + 6q^{14} + 5q^{13} + 3q^{12} - 3q^{10} -\\&&- 5q^{
9} - 5q^{8} - 3q^{7}-q^6,\\
s_3^{(5)}&=&q^{20} + q^{19} + 2q^{18} + 3q^{17} + 5q^{16} + 6
q^{15} + 8q^{14} + 9q^{13} + 9q^{12} + 7q^{11} +\\ &&+ 4q^{
10} + q^{9} - 2q^{8}- 4q^{7} - 5q^{6} - 4q^{5} - 2q^{4},\\
  s_3^{(6)}&=&q^{18} + q^{17} + 2q^{16} + 3q^{15} + 4q^{14} + 5
q^{13} + 7q^{12} + 7q^{11} + 8q^{10} + 8q^{9} +\\ &&+ 6q^{8
} + 3q^{7} + q^{6} - q^{5}
  - 2q^{4} - 2q^{3} - 2q^{2} - q,\\
  s_3^{(7)}&=&q^{14} + q^{13} + 2q^{12} + 2q^{11} + 3q^{10} + 3q^{9} + 
4q^{8} + 4q^{7} + 4q^{6} + 3q^{5} + 3q^{4}+\\ && + 2q^{3}-q-1,\\
s_3^{(8)}&=&q^{8} + q^{7} + q^{6} + q^{5} + q^{4} + q^{3} + q^{2} + q + 1,\\
 s_3^{(9)}&=&1.\end{eqnarray*}
\end{example}

 Since the property of a subspace to be generating is open in the Zariski topology, it is reasonable to define
 $$r_d^{(m)}(q)=\left[{d^2\atop m}\right]_q-s_d^{(m)}(q).$$
 
 Namely, since $\left[{d^2\atop m}\right]_q$ is the counting polynomial for ${\rm Gr}_m(M_d(k))$ and $s_d^{(m)}(q)$ is the counting polynomial for the open subset $S_d^{(m)}(k)\subset{\rm Gr}_m(M_d(k))$, we see that $r_d^{(m)}(q)$ is the counting polynomial for the complement
 $$R_d^{(m)}(k):={\rm Gr}_m(M_d(k))\setminus S_d^{(m)}(k),$$
 the closed subscheme of spaces generating a proper subalgebra of $M_d(k)$.
 
 \begin{proposition}\label{propdeg} We have $\deg r_d^{(m)}(q)\leq m(d^2-m)-(m-1)(d-1)$.
 \end{proposition}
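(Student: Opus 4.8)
The plan is to pass from the degree of the counting polynomial to the dimension of the scheme $R_d^{(m)}$ via Lemma \ref{pc}, and then to bound this dimension by a direct incidence-variety construction. Since $R_d^{(m)}(k)$ is the complement in the Grassmannian of the open subscheme $S_d^{(m)}(k)$, and both ${\rm Gr}_m(M_d(k))$ and $S_d^{(m)}(k)$ have polynomial count, so does $R_d^{(m)}(k)$, with counting polynomial $r_d^{(m)}(q)$. By Lemma \ref{pc} we therefore have $\deg r_d^{(m)}(q)=\dim R_d^{(m)}$, so it suffices to prove the bound $\dim R_d^{(m)}\leq m(d^2-m)-(m-1)(d-1)$.

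First I would identify the geometric points of $R_d^{(m)}$. By Lemma \ref{key}, an $m$-subspace $U\subset M_d(\overline{k})$ fails to generate precisely when the associated representation of $F^m(\overline{k})$ is reducible, that is, when the matrices in $U$ admit a common nonzero proper invariant subspace $W\subset\overline{k}^d$. Thus $R_d^{(m)}$ is covered by the loci $R_{d,e}^{(m)}$ of subspaces admitting such an invariant $W$ with $\dim W=e$, for $1\leq e\leq d-1$, and $\dim R_d^{(m)}=\max_e\dim R_{d,e}^{(m)}$.

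Next I would estimate each $\dim R_{d,e}^{(m)}$ through the incidence variety
$$Z_e=\{(W,U) : W\in{\rm Gr}_e(\overline{k}^d),\ U\in{\rm Gr}_m(M_d(\overline{k})),\ A(W)\subseteq W\text{ for all }A\in U\}.$$
The first projection $Z_e\rightarrow{\rm Gr}_e(\overline{k}^d)$ realizes $Z_e$ as a Grassmann bundle: over a fixed $W$ the fibre consists of the $m$-subspaces contained in the stabilizer subalgebra $\{A\in M_d(\overline{k}):A(W)\subseteq W\}$, which in a basis adapted to $W$ is the space of block upper-triangular matrices, of dimension $d^2-e(d-e)$. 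Hence
$$\dim Z_e=e(d-e)+m\bigl(d^2-e(d-e)-m\bigr)=m(d^2-m)-(m-1)\,e(d-e).$$
Since $R_{d,e}^{(m)}$ is the image of the second projection $Z_e\rightarrow{\rm Gr}_m(M_d(\overline{k}))$, we obtain $\dim R_{d,e}^{(m)}\leq\dim Z_e$.

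Finally I would optimize over $e$. As $m\geq 1$, the coefficient $(m-1)$ is nonnegative, so $\dim Z_e$ is largest when $e(d-e)$ is smallest; over $1\leq e\leq d-1$ this minimum equals $d-1$, attained at $e=1$ and $e=d-1$. Therefore $\dim R_d^{(m)}\leq m(d^2-m)-(m-1)(d-1)$, as claimed. The step I expect to require the most care is the scheme-theoretic justification of the incidence construction, namely verifying that $Z_e$ is of finite type with the asserted Grassmann-bundle structure over $\overline{k}$, so that the fibre-dimension computation and the inequality $\dim(\mathrm{image})\leq\dim Z_e$ are valid; the combinatorial optimization and the extremal choice $e=1$ are then immediate.
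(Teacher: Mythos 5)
Your proposal is correct and follows essentially the same route as the paper: the paper parametrizes the failure locus by conjugates of the parabolic subalgebra $\mathfrak{p}_{e,f}$ (a ${\rm GL}_d/P_{e,f}$-orbit inside ${\rm Gr}_{d^2-ef}(M_d)$) and uses the flag scheme ${\rm Fl}_{m,d^2-ef}$, which is canonically the same incidence variety as your $Z_e$ fibred over ${\rm Gr}_e(\overline{k}^d)$, and the dimension count $m(d^2-m)-(m-1)e(d-e)$ with the optimization $e(d-e)\geq d-1$ is identical.
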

 
 \begin{proof}  Suppose $U\subset M_d(k)$ belongs to $R_d^{(m)}(k)$, thus generates a proper subalgebra $A$ of $M_d(k)$. Then $\overline{U}=\overline{k}\otimes_kU$ generates the proper subalgebra $$\overline{A}=\overline{k}\otimes_kA\subset\overline{k}\otimes_kM_d(k)\simeq M_d(\overline{k}).$$ Thus the representation of $\overline{A}$ on $\overline{k}^d$ cannot be irreducible by Lemma \ref{key}, and therefore admits a proper non-zero subrepresentation. Thus $\overline{A}$ is contained in some conjugate of an algebra $\mathfrak{p}_{e,f}$ of block-upper triangular matrices of block sizes $e$ and $f$, where $d=e+f$ and $e,f\geq 1$. In other words, $\overline{A}$ belongs to
 $$X_{e,f}=\{g\mathfrak{p}_{e,f}g^{-1}\, :\, g\in{\rm GL}_d(\overline{k})\}\subset{\rm Gr}_{d^2-ef}(M_d(\overline{k})).$$ 
 The set $X_{e,f}$ is the closed image of the map $${\rm GL}_d(\overline{k})/P_{d,e}\rightarrow {\rm Gr}_{d^2-ef}(M_d(\overline{k}))$$ given by conjugating $\mathfrak{p}_{e,f}$, where $$P_{e,f}={\rm GL}_d(\overline{k})\cap \mathfrak{p}_{e,f}$$
 is a parabolic subgroup.  Thus $X_{e,f}$ is a closed subscheme of dimension at most $ef$. We consider the scheme
 $${\rm Fl}_{m,d^2-ef}(M_d(\overline{k}))=\{(U,V)\, :\, U\subset V\}\subset {\rm Gr}_m(M_d(\overline{k}))\times {\rm Gr}_{d^2-ef}(M_d(\overline{k})).$$
The projection $p_2$ to the second component is equivariant for the natural action of ${\rm GL}(M_d(\overline{k}))$, with fibre over a fixed $V$ isomorphic to ${\rm Gr}_m(V)$, which has dimension $m(d^2-ef-m)$. We also consider the projection $p_1$ to the first component. By definition, $R_d^{(m)}(\overline{k})$ is contained in  the union of the $$p_1p_2^{-1}X_{e,f},$$
since $$\overline{U}\subset\overline{A}\subset g\mathfrak{p}_{d,e}g^{-1}$$
as above. We can thus estimate, for all $e,f\geq 1$ such that $e+f=d$:
$$\dim p_1p_2^{-1}X_{e,f}\leq\dim p_2^{-1}X_{e,f}\leq m(d^2-ef-m)m+ef=$$
$$=m(d^2-m)-(m-1)ef\leq m(d^2-m)-(m-1)(d-1).$$
Since the degree of the counting polynomial $r_d^{(m)}(q)$ equals the dimension of $R_d^{(m)}(\overline{k})$, this proves the claim.\end{proof}

\begin{corollary} For all $r=0,\ldots,d-2$, we have $$s_d^{(d^2-r)}(q)=\left[{d^2\atop r}\right]_q=\frac{(q^{d^2}-1)\cdot\ldots\cdot(q^{d^2-r+1}-1)}{(q^r-1)\cdot\ldots\cdot(q-1)}.$$
\end{corollary}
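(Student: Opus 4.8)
The plan is to recognize the statement as the vanishing of the complementary counting polynomial $r_d^{(m)}(q)$ in a suitable range. Setting $m=d^2-r$, the symmetry $\left[{d^2\atop r}\right]_q=\left[{d^2\atop d^2-r}\right]_q=\left[{d^2\atop m}\right]_q$ of Gaussian binomial coefficients shows that the claimed identity $s_d^{(d^2-r)}(q)=\left[{d^2\atop r}\right]_q$ is equivalent to $r_d^{(m)}(q)=\left[{d^2\atop m}\right]_q-s_d^{(m)}(q)=0$. Since $r_d^{(m)}(q)$ is the counting polynomial of the closed subscheme $R_d^{(m)}(\overline{k})$, whose degree equals the dimension of that scheme by Lemma \ref{pc}, it suffices to show that the degree bound from Proposition \ref{propdeg} becomes negative for the relevant values of $m$: a counting polynomial of negative degree must vanish identically.

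The main step is therefore purely numerical. Substituting $m=d^2-r$ into the bound of Proposition \ref{propdeg} gives
$$\deg r_d^{(m)}(q)\leq (d^2-r)r-(d^2-r-1)(d-1)=:D(r).$$
Expanding, one finds that $D(r)=-r^2+(d^2+d-1)r-(d^2-1)(d-1)$ is a downward parabola in $r$ whose vertex sits at $r=(d^2+d-1)/2$, far to the right of the range $0\leq r\leq d-2$. Hence $D$ is increasing on this range, and it suffices to verify $D(r)<0$ at the right endpoint $r=d-2$.

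A direct computation yields $D(d-2)=-d^2+2d-3=-(d-1)^2-2$, which is strictly negative for every $d$. Consequently $D(r)<0$ throughout $0\leq r\leq d-2$, so the scheme $R_d^{(m)}(\overline{k})$ is empty and $r_d^{(m)}(q)=0$, giving $s_d^{(d^2-r)}(q)=\left[{d^2\atop m}\right]_q=\left[{d^2\atop r}\right]_q$ as claimed.

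I do not anticipate a serious obstacle: once the problem is translated into the vanishing of $r_d^{(m)}(q)$, everything reduces to checking that the bound of Proposition \ref{propdeg} is negative, and the monotonicity observation reduces this to a single endpoint evaluation. The only point requiring care is the justification that a negative degree bound forces the polynomial to vanish, which holds because $r_d^{(m)}(q)$ counts the points of a scheme and therefore has non-negative degree unless that scheme is empty.
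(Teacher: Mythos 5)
Your proposal is correct and follows exactly the route the paper intends: the corollary is stated immediately after Proposition \ref{propdeg} precisely because substituting $m=d^2-r$ into its degree bound yields a negative value for $0\leq r\leq d-2$, forcing $r_d^{(m)}(q)=0$ and hence $s_d^{(d^2-r)}(q)=\left[{d^2\atop r}\right]_q$ by the symmetry of Gaussian binomials. Your arithmetic ($D(d-2)=-(d-1)^2-2<0$ together with monotonicity on the range) correctly fills in the verification the paper leaves implicit.
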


\begin{example} We have $$r_4^{(9)}(q)=1+q+q^2-q^4-2q^5+q^6+\ldots+2q^{39},$$ so in particular, $r_d^{(m)}(q)$ can have negative coefficients (although all $r_d^{(m)}(q)$ for $d\leq 3$ have nonnegative coefficients).\end{example}
 
 To finish this section, we observe a vanishing property for the $s_d^{(m)}(q)$:
 
 \begin{lemma} We have  $s_d^{(m)}(1)=0$ if $m\leq d-1$.
 \end{lemma}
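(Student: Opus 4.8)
The plan is to read the value $s_d^{(m)}(1)$ as an Euler characteristic and to kill it with a torus action. Since $S_d^{(m)}$ has polynomial count $s_d^{(m)}(q)$ (Theorem \ref{t1}), the standard comparison between point counts and Euler characteristics for polynomial-count schemes, in the setting of \cite{HR}, gives $s_d^{(m)}(1)=\chi_c(S_d^{(m)})$, the compactly supported Euler characteristic of its $\overline{k}$-points. It therefore suffices to exhibit an action of $\mathbb{G}_m$ on $S_d^{(m)}$ with empty fixed-point locus: a torus acting on a variety without fixed points has vanishing Euler characteristic, because every orbit is a copy of $\mathbb{G}_m$ modulo a finite group and hence has $\chi_c=0$.

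To build the action I would fix integers $w_1,\ldots,w_d$ whose pairwise differences $w_i-w_j$ (for $i\neq j$) are all nonzero and distinct, and let $\mathbb{G}_m$ act on $M_d$ by conjugation through $\lambda\mapsto\mathrm{diag}(\lambda^{w_1},\ldots,\lambda^{w_d})$. Conjugation is an algebra automorphism, so it preserves the generation property and the induced action on $\mathrm{Gr}_m(M_d)$ restricts to $S_d^{(m)}$. For this generic choice of weights the weight-space decomposition of $M_d$ has the diagonal matrices as its weight-$0$ part, while each off-diagonal matrix unit $E_{ij}$ spans its own one-dimensional weight space.

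A fixed point of $\mathbb{G}_m$ in $\mathrm{Gr}_m(M_d)$ is a subspace that is the direct sum of its intersections with the weight spaces; thus a fixed $U$ is a subspace of the diagonal together with the span of $E_{ij}$ for $(i,j)$ ranging over some set $S'$ of off-diagonal positions, with $|S'|\le m$. I would then argue that such a $U$ cannot generate $M_d$ once $m\le d-1$: expanding products of generators, the off-diagonal support of the generated algebra is contained in the set of pairs $(i,j)$ joined by a directed walk in the digraph on $\{1,\ldots,d\}$ with edge set $S'$, the diagonal generators contributing only loops and never enlarging reachability. Generating all of $M_d$ forces every off-diagonal position to occur, i.e. forces the digraph $S'$ to be strongly connected; but a strongly connected digraph on $d\ge 2$ vertices has at least $d$ edges, contradicting $|S'|\le m\le d-1$. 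Hence $S_d^{(m)}$ has no fixed point and $s_d^{(m)}(1)=\chi_c(S_d^{(m)})=0$.

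The step I expect to be most delicate is the treatment of the weight-$0$ (diagonal) block: for a non-generic one-parameter subgroup there really are fixed generating subspaces --- for weights in arithmetic progression the space $\langle E_{12}+E_{23},\,E_{31}\rangle$ generates $M_3$ with $m=2=d-1$ --- so the argument genuinely needs the differences $w_i-w_j$ to be distinct, guaranteeing one-dimensional off-diagonal weight spaces and preventing the diagonal block from producing new off-diagonal reachabilities. The second point requiring care is the identity $s_d^{(m)}(1)=\chi_c(S_d^{(m)})$, where polynomiality of the count is used. As an alternative, purely algebraic route one can start from Lemma \ref{inv}: writing $q^j-1=(q-1)(1+q+\cdots+q^{j-1})$ shows that the coefficient of $a_d^{(r)}(q)$ there vanishes to order $(d-1)-m$ at $q=1$, which disposes of the range $m<d-1$ at once; the remaining boundary case $m=d-1$ then reduces to the vanishing $a_d^{(r)}(1)=0$ for $d\ge 2$, which the same torus principle supplies upon applying the scaling action $(A_1,\ldots,A_r)\mapsto(\lambda A_1,\ldots,\lambda A_r)$ to the moduli space of absolutely irreducible representations.
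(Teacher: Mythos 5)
Your argument is correct, but it is a genuinely different route from the paper's. The paper's own proof is a two-line order-of-vanishing count: in the inversion formula of Lemma \ref{inv} the coefficient of $a_d^{(r)}(q)$ has numerator $(q^d-1)\cdots(q^2-1)$ vanishing to order $d-1$ at $q=1$ and denominator vanishing to order $m$, so the coefficients themselves vanish at $q=1$. Your main proof instead reads $s_d^{(m)}(1)$ as $\chi_c(S_d^{(m)})$ via polynomial count (Theorem \ref{t1} together with the standard Frobenius-eigenvalue comparison as in \cite{HR}) and kills it by a fixed-point-free $\mathbb{G}_m$-action; the fixed-point analysis (generic weights, one-dimensional off-diagonal weight spaces, reachability in the digraph $S'$, and the bound that a strongly connected digraph on $d$ vertices needs at least $d$ edges while $|S'|\le m\le d-1$) is complete and correct, and your cautionary example $\langle E_{12}+E_{23},E_{31}\rangle$ rightly shows the genericity of the weights is essential. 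What each approach buys: the paper's argument is shorter and purely formal, while yours is more conceptual, gives a topological reason for the vanishing, and generalizes (the same torus trick proves $a_d^{(r)}(1)=0$ for $d\ge 2$). Your secondary, ``purely algebraic'' route is essentially the paper's proof, and your reading of it is in fact sharper than the paper's own: the order count $\bigl((d-1)-m\bigr)$ only yields vanishing of the coefficients for $m\le d-2$, and the boundary case $m=d-1$ genuinely requires the extra input $a_d^{(r)}(1)=0$, which the paper's two-line proof glosses over but which your torus argument (or the factorization by $(u-1)(u-q)$ later in the paper) supplies. So your proposal not only proves the lemma but also patches a small imprecision in the published argument.
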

 
 \begin{proof} The coefficients in the summation in Lemma \ref{inv} have a zero at $q=1$ of order $d-1$ in the numerator, and of order $m$ in the denominator. Thus the coefficients vanish at $q=1$ for $m\leq d-1$.\end{proof}

\section{Two-parameter versions of counting polynomials for absolutely irreducible representations}\label{adqu}
 
For the following, recall from Corollary \ref{eqn} that  $$a_d^{(m)}(q)=\frac{q-1}{(q^d-1)\cdot\ldots\cdot(q^d-q^{d-1})}\cdot\sum_{r=1}^{d^2}(q^m-1)\cdot\ldots\cdot(q^m-q^{r-1})s_d^{(r)}(q)$$
 for all $m\geq 0$.\\[1ex]
We define two-variable versions of the polynomials $a_d^{(m)}(q)$ formally, by generalizing the explicit formula Theorem \ref{mr} for the $a_d^{(m)}(q)$. We consider the complete $\mathbb{Q}(q,u)$-algebra $\widehat{R}=\mathbb{Q}(q,u)[[t]]$ with its adic topology and denote its maximal ideal by $\mathfrak{m}$. Again we define ${\rm Exp}:\mathfrak{m}\rightarrow1+\mathfrak{m}$ as the unique continuous homomorphism from the additive group of $\mathfrak{m}$ to the multiplicative group of $1+\mathfrak{m}$ such that $${\rm Exp}(q^iu^nt^d)=(1-q^iu^nt^d)^{-1},$$ and we denote its inverse by ${\rm Log}$. We define a $\mathbb{Q}(q,u)$-linear continuous twist operator $T$ on $\widehat{R}$ by $$T(t^d)=(-1)^d(q^{-1}u)^{-d(d-1)/2}t^d.$$
 
\begin{definition}\label{defadqu} Define polynomials $a_d(q,u)\in\mathbb{Q}(q)[u]$ by
$$\sum_{d\geq 1}a_d(q,u)t^d=(1-q)\cdot{\rm Log}(T^{-1}(\sum_{d\geq 0}\frac{u^{d(d+1)/2}t^d}{(1-q)\cdot\ldots\cdot(1-q^d)})^{-1}).$$
\end{definition}

 That $a_d(q,u)$ is indeed a polynomial in $u$ can be seen from the fact that the sum on the right hand side has coefficients in $\mathbb{Q}(q)[u]$, and both inverting the series and taking ${\rm Log}$ of it preserve polynomiality in $u$.\\[1ex]
It follows immediately from this definition, together with Theorem \ref{mr}, that
\begin{lemma} For all $m\geq 0$, we have $$a_d(q,q^m)=a_d^{(m)}(q).$$
\end{lemma}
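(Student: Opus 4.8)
The plan is to show that specializing the variable $u$ to $q^m$ in the two-variable construction of Definition \ref{defadqu} recovers exactly the construction of Theorem \ref{mr}, so that the resulting polynomials must agree. The key observation is that the substitution $u\mapsto q^m$ is a continuous $\mathbb{Q}(q)$-algebra homomorphism $\phi:\widehat{R}\rightarrow R$, sending $\mathbb{Q}(q,u)[[t]]$ to $\mathbb{Q}(q)[[t]]$, which respects the $t$-adic topology and hence commutes with taking limits of series. First I would check that $\phi$ intertwines the two $\rm Exp$ maps: since $\phi(q^iu^nt^d)=q^{i+mn}t^d$, and $\rm Exp$ on both sides is the unique continuous homomorphism determined by its values on such monomials, we get $\phi\circ{\rm Exp}={\rm Exp}\circ\phi$, and consequently $\phi\circ{\rm Log}={\rm Log}\circ\phi$ by taking inverses of group homomorphisms.

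Next I would verify that $\phi$ commutes with the two twist operators $T$. On the two-variable side $T(t^d)=(-1)^d(q^{-1}u)^{-d(d-1)/2}t^d$, so $\phi(T(t^d))=(-1)^d(q^{-1}q^m)^{-d(d-1)/2}t^d=(-1)^dq^{(1-m)d(d-1)/2}t^d$, which is precisely the value of the one-variable $T$ on $t^d$ from Theorem \ref{mr}. Since both operators are $\mathbb{Q}(q)$-linear and continuous, this monomial-by-monomial agreement shows $\phi\circ T=T\circ\phi$, and the same holds for $T^{-1}$. Finally, applying $\phi$ to the $q$-hypergeometric series inside Definition \ref{defadqu}, the general term $u^{d(d+1)/2}t^d/((1-q)\cdots(1-q^d))$ maps to $q^{md(d+1)/2}t^d/((1-q)\cdots(1-q^d))$, which is exactly the term of $F(t)$ in Theorem \ref{mr}.

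Assembling these compatibilities, I would apply $\phi$ to both sides of the defining identity in Definition \ref{defadqu}. The left side becomes $\sum_{d\geq 1}a_d(q,q^m)t^d$, since $\phi$ simply substitutes $u=q^m$ coefficientwise. The right side, using that $\phi$ commutes with $\rm Log$, with $T^{-1}$, and with inversion of series (all of which are continuous operations respecting the algebra structure), becomes $(1-q)\cdot{\rm Log}(T^{-1}F(t)^{-1})$, which by Theorem \ref{mr} equals $\sum_{d\geq 1}a_d^{(m)}(q)t^d$. Comparing coefficients of $t^d$ yields $a_d(q,q^m)=a_d^{(m)}(q)$ for every $d\geq 1$.

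The only genuinely delicate point is justifying that the substitution map $\phi$ is well defined and continuous as a map between the two completed rings, and in particular that it commutes with the infinite operations $\rm Exp$, $\rm Log$, and series inversion rather than merely with finite algebraic manipulations. I expect this to be routine given that all three operations are built as continuous homomorphisms (or their inverses) with respect to the $t$-adic topologies, and $\phi$ clearly sends $\mathfrak{m}\subset\widehat{R}$ into $\mathfrak{m}\subset R$; thus the main obstacle is bookkeeping rather than any substantive difficulty, since the specialization is designed precisely so that Definition \ref{defadqu} degenerates to the formula of Theorem \ref{mr}.
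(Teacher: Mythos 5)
Your proposal is correct and is precisely the argument the paper has in mind: the paper states the lemma ``follows immediately from this definition, together with Theorem \ref{mr},'' and what you have done is spell out that immediate verification (the substitution $u\mapsto q^m$ sends the two-variable twist, the $q$-hypergeometric series, and ${\rm Exp}/{\rm Log}$ to their one-variable counterparts). All your monomial computations check out, so this is the same approach, just written in full.
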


Thus the identity at the beginning of this section now implies

\begin{corollary} We have

  $$a_d(q,u)=\frac{q-1}{(q^d-1)\cdot\ldots\cdot(q^d-q^{d-1})}\cdot\sum_{r=1}^{d^2}(u-1)\cdot\ldots\cdot(u-q^{r-1})s_d^{(r)}(q).$$
  \end{corollary}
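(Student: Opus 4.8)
The plan is to derive the claimed formula for $a_d(q,u)$ directly from the two identities we already have in hand: the formula of Corollary \ref{eqn} expressing $a_d^{(m)}(q)$ in terms of the $s_d^{(r)}(q)$, and the substitution identity $a_d(q,q^m)=a_d^{(m)}(q)$ from the preceding lemma. The key observation is that in the Corollary \ref{eqn} expression the dependence on $m$ enters \emph{only} through the product $(q^m-1)\cdot\ldots\cdot(q^m-q^{r-1})$, with everything else independent of $m$. Setting $u=q^m$, this product becomes $(u-1)(u-q)\cdot\ldots\cdot(u-q^{r-1})$, which is exactly the product $(u-1)\cdot\ldots\cdot(u-q^{r-1})$ appearing on the right hand side of the claimed formula.

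Concretely, first I would fix $d$ and consider the two sides of the asserted equation as elements of $\mathbb{Q}(q)[u]$. The right hand side is manifestly a polynomial in $u$ (a $\mathbb{Q}(q)$-linear combination of the polynomials $(u-1)\cdot\ldots\cdot(u-q^{r-1})$), and the left hand side $a_d(q,u)$ is a polynomial in $u$ by the remark following Definition \ref{defadqu}. Thus both sides are polynomials in the single variable $u$ with coefficients in $\mathbb{Q}(q)$. To prove they are equal, it suffices to check that they agree at infinitely many values of $u$. Specializing $u=q^m$ for $m=0,1,2,\ldots$, the left hand side gives $a_d(q,q^m)=a_d^{(m)}(q)$ by the lemma, and the right hand side becomes precisely the expression of Corollary \ref{eqn}, hence also equals $a_d^{(m)}(q)$. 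Therefore the two polynomials in $u$ agree at the infinitely many points $u=q^m$, $m\geq 0$.

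The conclusion then follows from a standard polynomial-identity argument: two polynomials over the field $\mathbb{Q}(q)$ that agree at infinitely many values of the variable $u$ must be identical. One should note that the values $q^m$ for $m\geq 0$ are pairwise distinct as elements of $\mathbb{Q}(q)$ (indeed $q$ is transcendental over $\mathbb{Q}$, so the powers $1,q,q^2,\ldots$ are all distinct), so these are genuinely infinitely many distinct evaluation points, and the difference of the two sides, being a polynomial in $u$ with infinitely many roots, must vanish identically.

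I expect no serious obstacle here; the result is essentially a formal consequence of the two cited facts combined with the rigidity of polynomials. The only point requiring a modicum of care is the bookkeeping in verifying that the right hand side of Corollary \ref{eqn}, under the substitution $u=q^m$, reproduces the displayed formula term by term, i.e.\ that $(q^m-1)\cdot\ldots\cdot(q^m-q^{r-1})$ matches $(u-1)\cdot\ldots\cdot(u-q^{r-1})$ and that the remaining prefactor $\tfrac{q-1}{(q^d-1)\cdot\ldots\cdot(q^d-q^{d-1})}$ is genuinely free of $m$. Since this is immediate from inspection, the entire argument is short, and I would present it as a one-paragraph interpolation argument rather than any computation.
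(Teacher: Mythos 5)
Your proposal is correct and is exactly the argument the paper intends (the paper merely asserts that the lemma $a_d(q,q^m)=a_d^{(m)}(q)$ together with Corollary \ref{eqn} ``now implies'' the statement, leaving the interpolation step implicit). Spelling out that both sides are polynomials in $u$ over $\mathbb{Q}(q)$ agreeing at the infinitely many distinct points $u=q^m$ is precisely the right justification.
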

 
 
 We recall the Mahler-type expansion of \cite[Theorem 6.1]{HeR}.
 
 \begin{theorem} If $f(q,u)\in\mathbb{Q}(q)[u]$ is a polynomial such that $f(q,q^m)\in\mathbb{Z}[q]$ for all $m\geq 0$, then $f(q,u)$ admits an expansion of the form $$f(q,u)=\sum_lc_l(q)\left\langle{u\atop l}\right\rangle_q,$$ where
 $$\left\langle{u\atop l}\right\rangle_q=\prod_{i=1}^l\frac{q^{1-i}u-1}{q^i-1}$$
 and $c_l(q)\in\mathbb{Z}[q]$, non-zero only for finitely many $l$.
 \end{theorem}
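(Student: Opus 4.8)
The plan is to recognize the polynomials $\left\langle{u\atop l}\right\rangle_q$ as a $q$-analogue of the binomial basis and to run the classical Mahler interpolation argument in $q$-deformed form. First I would observe that $\left\langle{u\atop l}\right\rangle_q$ is a polynomial in $u$ of degree exactly $l$, with leading coefficient $\prod_{i=1}^l q^{1-i}/(q^i-1)\in\mathbb{Q}(q)^*$; hence these polynomials form a $\mathbb{Q}(q)$-basis of $\mathbb{Q}(q)[u]$. Consequently every $f(q,u)\in\mathbb{Q}(q)[u]$ has a unique expansion $f(q,u)=\sum_l c_l(q)\left\langle{u\atop l}\right\rangle_q$ with $c_l(q)\in\mathbb{Q}(q)$, and $c_l(q)=0$ for $l>\deg_u f$, which already yields the finiteness assertion. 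The remaining content is the integrality $c_l(q)\in\mathbb{Z}[q]$.

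The crucial observation linking the two normalizations is that substituting $u=q^m$ gives $\left\langle{q^m\atop l}\right\rangle_q=\prod_{i=1}^l (q^{m+1-i}-1)/(q^i-1)=\left[{m\atop l}\right]_q$, the Gaussian binomial coefficient. Evaluating the expansion at $u=q^m$ therefore produces, for every $m\geq 0$, the $q$-binomial transform $f(q,q^m)=\sum_{l=0}^m c_l(q)\left[{m\atop l}\right]_q$, a finite sum since $\left[{m\atop l}\right]_q=0$ for $l>m$. Viewed for $m=0,1,2,\ldots$ as a linear system, the coefficient matrix $\left(\left[{m\atop l}\right]_q\right)_{m,l}$ is lower triangular with $\left[{m\atop m}\right]_q=1$ on the diagonal, so it is invertible over $\mathbb{Z}[q]$ with inverse again lower triangular over $\mathbb{Z}[q]$; in particular each $c_k(q)$ is a $\mathbb{Z}[q]$-combination of the values $f(q,q^l)$.

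To make the inverse explicit --- and here is the one genuinely $q$-combinatorial step, which I expect to be the main obstacle --- I would establish the $q$-binomial inversion
$$\sum_{l=k}^m(-1)^{m-l}q^{(m-l)(m-l-1)/2}\left[{m\atop l}\right]_q\left[{l\atop k}\right]_q=\delta_{m,k},$$
which follows from identity (\ref{i3}): using the absorption identity $\left[{m\atop l}\right]_q\left[{l\atop k}\right]_q=\left[{m\atop k}\right]_q\left[{m-k\atop l-k}\right]_q$ and the reindexing $b=m-l$ reduces the inner sum to $\sum_{b=0}^{m-k}(-1)^b q^{b(b-1)/2}\left[{m-k\atop b}\right]_q=\delta_{m-k,0}$. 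This gives the closed form $c_k(q)=\sum_{l=0}^k(-1)^{k-l}q^{(k-l)(k-l-1)/2}\left[{k\atop l}\right]_q f(q,q^l)$. Since by hypothesis every $f(q,q^l)\in\mathbb{Z}[q]$, and the coefficients $q^{(k-l)(k-l-1)/2}\left[{k\atop l}\right]_q$ lie in $\mathbb{Z}[q]$, each $c_k(q)$ is a $\mathbb{Z}[q]$-linear combination of elements of $\mathbb{Z}[q]$, hence lies in $\mathbb{Z}[q]$, completing the proof. Everything beyond the orthogonality relation is the standard Mahler scheme transported through the substitution $u=q^m$.
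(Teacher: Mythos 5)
Your argument is correct. The paper itself gives no proof of this statement --- it is recalled verbatim from \cite[Theorem 6.1]{HeR} --- so there is no internal proof to compare against; what you have written is a valid self-contained substitute. The key points all check out: $\left\langle{u\atop l}\right\rangle_q$ has $u$-degree exactly $l$ with invertible leading coefficient in $\mathbb{Q}(q)$, so the expansion exists uniquely over $\mathbb{Q}(q)$ with finitely many terms; the specialization $\left\langle{q^m\atop l}\right\rangle_q=\left[{m\atop l}\right]_q$ is exactly the paper's Gaussian binomial, vanishing for $l>m$; and your orthogonality relation does follow from the absorption identity together with identity (\ref{i3}), yielding $c_k(q)=\sum_{l=0}^k(-1)^{k-l}q^{(k-l)(k-l-1)/2}\left[{k\atop l}\right]_q f(q,q^l)\in\mathbb{Z}[q]$. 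This is, in spirit, the same unitriangular-inversion-via-(\ref{i3}) mechanism the paper itself uses in Lemma \ref{inv} to invert the relation between $s_d^{(m)}(q)$ and $a_d^{(r)}(q)$, so your proof fits naturally into the paper's toolkit. The only cosmetic caveat is that for $m$ exceeding $\deg_u f$ the sum $\sum_{l=0}^m c_l(q)\left[{m\atop l}\right]_q$ should be read as truncated at $\deg_u f$, which is harmless since those $c_l$ vanish.
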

 
 Thus we see that the counting polynomials $s_d^{(m)}(q)$ naturally appear in the Mahler-type expansion of the $a_d(q,u)$:
 
 \begin{corollary}\label{mahler} The Mahler-type expansion of $a_d(q,u)$ takes the form
   $$a_d(q,u)=\frac{q-1}{(q^d-1)\cdot\ldots\cdot(q^d-q^{d-1})}\cdot\sum_{r=1}^{d^2}s_d^{(r)}(q)q^{r(r-1)/2}(q^r-1)\cdot\ldots\cdot(q-1)\cdot\left\langle{u\atop r}\right\rangle.$$
   \end{corollary}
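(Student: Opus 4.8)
The goal is to convert the formula of the previous Corollary,
$$a_d(q,u)=\frac{q-1}{(q^d-1)\cdot\ldots\cdot(q^d-q^{d-1})}\cdot\sum_{r=1}^{d^2}(u-1)\cdot\ldots\cdot(u-q^{r-1})s_d^{(r)}(q),$$
into the claimed Mahler-type expansion in the basis $\left\langle{u\atop r}\right\rangle_q$. The plan is to show that the two differ only by an explicit monomial factor in $q$ times the product $(q^r-1)\cdot\ldots\cdot(q-1)$, because both $(u-1)\cdot\ldots\cdot(u-q^{r-1})$ and $\left\langle{u\atop r}\right\rangle_q$ are monic-type products over the same shifted-geometric sequence of roots. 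Since the expansion basis $\left\langle{u\atop r}\right\rangle_q=\prod_{i=1}^r\frac{q^{1-i}u-1}{q^i-1}$ is a $\mathbb{Q}(q)$-basis of $\mathbb{Q}(q)[u]$ indexed by degree, matching the degree-$r$ terms is exactly what produces the coefficients $c_r(q)$.

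First I would isolate the relation between the two length-$r$ products. Rewriting $\left\langle{u\atop r}\right\rangle_q=\prod_{i=1}^r\frac{q^{1-i}u-1}{q^i-1}$, I factor $q^{1-i}$ out of each numerator to get $\prod_{i=1}^r q^{1-i}(u-q^{i-1})$ divided by $\prod_{i=1}^r(q^i-1)$. The numerator product of linear factors is precisely $(u-1)(u-q)\cdots(u-q^{r-1})$, identical to the product appearing in the Corollary, so the whole relation reduces to bookkeeping of the scalar prefactor. The powers of $q$ collect to $\prod_{i=1}^r q^{1-i}=q^{\sum_{i=1}^r(1-i)}=q^{r-r(r+1)/2}=q^{-r(r-1)/2}$, and the denominator contributes $(q^r-1)\cdot\ldots\cdot(q-1)$. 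Inverting this to solve for the linear-factor product gives
$$(u-1)\cdot\ldots\cdot(u-q^{r-1})=q^{r(r-1)/2}(q^r-1)\cdot\ldots\cdot(q-1)\cdot\left\langle{u\atop r}\right\rangle_q.$$

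Substituting this identity termwise into the formula of the Corollary yields exactly the asserted expansion, with $c_r(q)$ read off as $s_d^{(r)}(q)q^{r(r-1)/2}(q^r-1)\cdot\ldots\cdot(q-1)$ up to the common global prefactor $\frac{q-1}{(q^d-1)\cdots(q^d-q^{d-1})}$. To invoke the Mahler-type theorem legitimately as an \emph{expansion} (and to know the resulting $c_r(q)$ lie in $\mathbb{Z}[q]$), I would note that $a_d(q,u)$ satisfies the hypothesis of that theorem: by the earlier Lemma, $a_d(q,q^m)=a_d^{(m)}(q)\in\mathbb{Z}[q]$ for all $m\geq 0$. Since the $\left\langle{u\atop r}\right\rangle_q$ form a basis and the substituted expression is already written in that basis, uniqueness of the expansion identifies the coefficients and forces integrality.

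The only genuinely delicate point is the exponent arithmetic: one must be careful that $\sum_{i=1}^r(1-i)=-r(r-1)/2$ rather than an off-by-one variant, and that the indices on both the numerator roots and the denominator range over exactly $i=1,\ldots,r$ so that no factor is dropped or doubled. This is purely a verification, not a conceptual obstacle; everything else is direct substitution. I expect no real difficulty beyond confirming these elementary index computations and checking that the prefactor $\frac{q-1}{(q^d-1)\cdots(q^d-q^{d-1})}$ is carried through unchanged.
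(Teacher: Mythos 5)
Your proposal is correct and matches the paper's (implicit) derivation exactly: the corollary follows by substituting the identity $(u-1)\cdots(u-q^{r-1})=q^{r(r-1)/2}(q^r-1)\cdots(q-1)\left\langle{u\atop r}\right\rangle_q$ into the preceding corollary's formula, and your exponent computation $\sum_{i=1}^r(1-i)=-r(r-1)/2$ is right. The appeal to the Mahler-type theorem via $a_d(q,q^m)=a_d^{(m)}(q)\in\mathbb{Z}[q]$ and uniqueness of the expansion is also exactly the paper's intended justification.
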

 
\begin{example} We make the $a_d(q,u)$, in factorized form, explicit for small $d$. We have $a_1(q,u)=u$ and
$$a_2(q,u)=\frac{u^2(u-1)(u-q)}{q(q-1)(q+1)}.$$
We have
$$a_3(q,u)=\frac{u^3(u-1)(u-q)(u+q)(u^3+u^2-(q+1)^2u+q^2(q+1))}{q^3(q-1)^2(q+1)(q^2+q+1)}.$$
We have $a_4(q,u)=$
\begin{eqnarray*}&=&\frac{u^4(u-1)(u-q)}{q^6(q-1)^3(q+1)^2(q^2+1)(q^2+q+1)}\cdot\left(u^{10}+(q+1)u^9+(q^2+q+1)u^8-\right.\\
&&-(q+1)(q^2+1)u^7-(2q^4+4q^3+5q^2+4q+2)u^6+\\
&&+(q+1)(q^2-q+1)(q^2+q+1)u^5+q(q^2+q+1)(q^3+q^2+1)u^4+\\
&&+q^2(q+1)(q^2+1)(q^2+q+1)u^3-q^4(q+1)(q^2+1)(q^2+q+1)u^2-\\
&&\left.-q^4(q+1)(q^2+1)(q^2+q+1)u+q^6(q^2+1)(q^2+q+1)\right).\end{eqnarray*}

\end{example}
 
 
We can now prove several factorization properties of the $a_d(q,u)$ and describe the highest $u$-terms:
 
\begin{theorem}\label{fact} We have $a_1(q,u)=u$ and, for all $d\geq 2$, $$a_d(q,u)=\frac{(q-1)u^d(u-1)(u-q)}{(q^d-1)\cdot\ldots\cdot(q^d-q^{d-1})}\cdot\overline{a}_d(q,u),$$
where
$$\overline{a}_d(q,u)=\sum_{r=0}^{d-2}[r+1]_qu^{d^2-d-2-r}+\mbox {terms of lower $u$-degree}.$$

\end{theorem}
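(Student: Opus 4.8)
The plan is to work throughout with the expression
$$a_d(q,u)=\frac{q-1}{(q^d-1)\cdots(q^d-q^{d-1})}\,P_d(u),\qquad P_d(u)=\sum_{r=1}^{d^2}(u-1)(u-q)\cdots(u-q^{r-1})\,s_d^{(r)}(q),$$
(the corollary expressing $a_d(q,u)$ through the $s_d^{(r)}(q)$), so that the asserted factorization is equivalent to the divisibility $u^d(u-1)(u-q)\mid P_d(u)$ in $\mathbb{Q}(q)[u]$, together with the identification $\overline a_d=P_d(u)/\bigl(u^d(u-1)(u-q)\bigr)$. The degree-$1$ case is handled first and separately: since $a_1(q,q^m)=a_1^{(m)}(q)=q^m$ for all $m\ge 0$ (the $1$-dimensional absolutely irreducible representations of $F^m(k)$ are the $m$-tuples of scalars), the polynomial $a_1(q,u)$ agrees with $u$ at the infinitely many points $u=q^m$, forcing $a_1(q,u)=u$.

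For $d\ge 2$, the two linear factors come from evaluations. By the lemma $a_d(q,q^m)=a_d^{(m)}(q)$ we have $a_d(q,1)=a_d^{(0)}(q)$ and $a_d(q,q)=a_d^{(1)}(q)$, and both vanish for $d\ge 2$: the free algebra on $0$ generators is $k$, whose only absolutely irreducible representation is one-dimensional, while a single matrix generates a commutative subalgebra of $M_d(k)$, which is never all of $M_d(k)$ for $d\ge 2$, so by Lemma \ref{key} such a representation is not absolutely irreducible. Hence $(u-1)$ and $(u-q)$ both divide $a_d(q,u)$, and being coprime their product does.

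The main obstacle is the factor $u^d$, i.e. that the $u$-valuation of $a_d(q,u)$ is at least $d$; here the evaluation trick is unavailable, and I would instead argue through the plethystic Definition \ref{defadqu} by means of a grading. Assign to each monomial the weight $w(u^at^e)=a-e$, which is additive under multiplication. The defining series $G(t)=\sum_{e\ge0}u^{e(e+1)/2}t^e/((1-q)\cdots(1-q^e))$ has all its nonconstant monomials of weight $w=e(e-1)/2\ge 0$, so $1-G$ has weight $\ge 0$ and $t$-degree $\ge 1$, whence $G^{-1}=\sum_{k\ge0}(1-G)^k$ also has weight $\ge 0$. The twist $T^{-1}$ multiplies the $t^e$-coefficient by a scalar times $u^{e(e-1)/2}$, raising $w$ by $e(e-1)/2\ge 0$; the map ${\rm Log}={\Psi}^{-1}\circ\log$ preserves weight $\ge 0$ as well, since $\log(1+M)=\sum_k\tfrac{(-1)^{k-1}}{k}M^k$ is built from products, and $\Psi^{-1}$, acting by $u^at^e\mapsto\sum_i\tfrac{\mu(i)}{i}u^{ai}t^{ei}$, scales $w$ by the factor $i$. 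Consequently every monomial $u^at^e$ occurring in $\sum_e a_e(q,u)t^e=(1-q)\,{\rm Log}(T^{-1}G(t)^{-1})$ satisfies $a\ge e$, which is exactly $u^d\mid a_d(q,u)$. I expect the weight behaviour of these four operations to be the crux of the argument; what makes it work is the convexity encoded in $w(G_e)=e(e-1)/2\ge 0$.

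Finally I would read off the top $u$-terms of $\overline a_d=P_d(u)/\bigl(u^d(u-1)(u-q)\bigr)$, for which only the coefficients of $u^{d^2},\dots,u^{d^2-(d-2)}$ of $P_d$ are needed. These involve only the $s_d^{(r)}(q)$ with $d^2-r\le d-2$, which by the earlier corollary equal $\left[{d^2\atop d^2-r}\right]_q$. Expanding $(u-1)\cdots(u-q^{r-1})$ by (\ref{i4}) and substituting $s=d^2-r$, the coefficient of $u^{d^2-j}$ in $P_d$ becomes $\sum_{s=0}^{j}(-1)^{j-s}q^{(j-s)(j-s-1)/2}\left[{d^2\atop s}\right]_q\left[{d^2-s\atop j-s}\right]_q$; using the standard identity $\left[{d^2\atop s}\right]_q\left[{d^2-s\atop j-s}\right]_q=\left[{d^2\atop j}\right]_q\left[{j\atop s}\right]_q$ this collapses, via (\ref{i3}), to $\left[{d^2\atop j}\right]_q\delta_{j,0}$. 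Thus $[u^{d^2}]P_d=1$ and $[u^{d^2-j}]P_d=0$ for $1\le j\le d-2$. Writing $\overline a_d=\sum_{r\ge0}c_r u^{d^2-d-2-r}$ and comparing coefficients in $P_d=(u^{d+2}-(1+q)u^{d+1}+qu^d)\,\overline a_d$ gives $c_0=1$ together with the recursion $c_j=(1+q)c_{j-1}-qc_{j-2}$ for $1\le j\le d-2$, whose solution is $c_j=[j+1]_q$, exactly the claimed highest $u$-terms.
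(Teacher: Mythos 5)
Your proof is correct and has the same three-part skeleton as the paper's: the factors $u-1$ and $u-q$ from $a_d^{(0)}(q)=a_d^{(1)}(q)=0$, the factor $u^d$ from the structure of Definition \ref{defadqu}, and the leading $u$-terms from the identification $s_d^{(d^2-r)}(q)=\left[{d^2\atop r}\right]_q$ for $r\le d-2$ together with Gaussian binomial identities. The execution of the last two steps differs enough to be worth comparing. For $u^d\mid a_d(q,u)$ the paper substitutes $z=ut$ into the defining series and reinvokes the polynomiality-in-$u$ argument; your weight $w(u^at^e)=a-e$ is exactly the $u$-adic valuation after that substitution, so the two arguments agree in substance, but your explicit verification that series inversion, the twist $T^{-1}$, $\log$, and $\Psi^{-1}$ all preserve $w\ge 0$ (hinging on $w(u^{e(e+1)/2}t^e)=e(e-1)/2\ge 0$) makes the step more transparent than the paper's one-line appeal. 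For the top coefficients, the paper first divides by $(u-1)(u-q)$ and evaluates $\sum_{r=0}^{d-2}(u-q^2)\cdots(u-q^{d^2-r-1})\left[{d^2\atop r}\right]_q$ using identities (\ref{i2}) and (\ref{i1}); you instead show directly that the coefficients of $u^{d^2},\dots,u^{d^2-(d-2)}$ in $P_d(u)$ are $1,0,\dots,0$ --- via (\ref{i4}), the standard identity $\left[{d^2\atop s}\right]_q\left[{d^2-s\atop j-s}\right]_q=\left[{d^2\atop j}\right]_q\left[{j\atop s}\right]_q$ (not among (\ref{i1})--(\ref{i4}) but immediate from the factorial form), and (\ref{i3}) --- and then obtain $c_j=[j+1]_q$ by solving the recursion $c_j=(1+q)c_{j-1}-qc_{j-2}$, whose characteristic roots $1$ and $q$ give exactly $[j+1]_q$. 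This is a clean alternative that avoids the paper's somewhat delicate direct manipulation of the quotient, at the price of one extra elementary identity; both versions ultimately rest on the same geometric input, Proposition \ref{propdeg} and its corollary.
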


\begin{proof} Divisibility of $a_d(q,u)$ by $u-1$ and $u-q$ for $d\geq 2$ follows from $$a_d^{(0)}(q)=0=a_d^{(1)}(q).$$  To see divisibility by $u^d$, we rewrite the definition of $a_d(q,u)$ as
\begin{equation*}\label{rewrite}\sum_{d\geq 1}(u^{-d}a_d(q,u))(ut)^d=(1-q)\cdot{\rm Log}(T^{-1}(\sum_{d\geq 0}\frac{u^{d(d-1)/2}(ut)^d}{(1-q)\cdot\ldots\cdot(1-q^d)})^{-1}).\end{equation*}
The polynomiality argument given after Definition \ref{defadqu} proves that $$u^{-d}a_d(q,u)$$ is a still a polynomial in $u$. By Proposition \ref{propdeg}, we can write
$$\frac{(q^d-1)\cdot\ldots\cdot(q^d-q^{d-1})}{(q-1)(u-1)(u-q)}a_d(q,u)=\sum_{r=0}^{d-2}(u-q^2)\cdot\ldots\cdot(u-q^{d^2-r-1})\left[{d^2\atop r}\right]_q+B(q,u),$$
where $B(q,u)$ has degree at most $d^2-d-1$ in $u$. From identity (\ref{i4}), we easily derive
$$(u-q^2)\cdot\ldots\cdot(u-q^{s-1})=\sum_{k=0}^{s-2}(-1)^kq^{k(k+3)/2}\left[{s-2\atop k}\right]_qu^{s-2-k}.$$ Then, using identity (\ref{i2}) in the second equality and identity (\ref{i1}) in the third equality,
$$\sum_{r=0}^{d-2}(u-q^2)\cdot\ldots\cdot(u-q^{d^2-r-1})\left[{d^2\atop r}\right]_q=$$
$$=\sum_{l=0}^{d^2-2}\sum_{r+k=l}(-1)^kq^{k(k+3)/2}\left[{d^2-r-2\atop k}\right]_q\left[{d^2\atop r}\right]_qu^{d^2-2-l}=$$
$$=\sum_{l=0}^{d^2-2}\sum_{r+k=l}(-1)^kq^{)d^2-r)k}\left[{-d^2+r+1+k\atop k}\right]_q\left[{d^2\atop r}\right]_qu^{d^2-2-l}=$$
$$=\sum_{l=0}^{d^2-2}\left[{l+1\atop l}\right]_qu^{d^2-2-l},$$
and the theorem is proved.\end{proof}

We can also describe the lowest $u$-term in the above expansion.

\begin{proposition} For $d\geq 2$, the constant term of $\overline{a}_d(q,u)$ equals
$$q^{(d+1)(d-2)/2}\frac{1}{d}(q^d-1)\cdot\ldots\cdot(q-1)\sum_{i\cdot j=d}\frac{\mu(i)}{(q^i-1)^j}.$$
In particular, its value at $q=1$ equals $(d-1)!$.
\end{proposition}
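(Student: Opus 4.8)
The plan is to reduce the statement to computing the coefficient of the lowest power of $u$ in $a_d(q,u)$, and then to extract this coefficient directly from the plethystic definition. By Theorem \ref{fact} we have $u^d\mid a_d(q,u)$, and the remaining factors $(u-1)(u-q)$ and $\overline{a}_d(q,u)$ have nonzero constant terms in $u$; since $(u-1)(u-q)$ has constant term $q$, comparing the coefficient of $u^d$ on both sides of the factorization gives
$$[u^d]\,a_d(q,u)=\frac{(q-1)q}{(q^d-1)\cdot\ldots\cdot(q^d-q^{d-1})}\cdot\overline{a}_d(q,0).$$
Thus it suffices to compute $[u^d]\,a_d(q,u)$ and to simplify $(q^d-1)\cdot\ldots\cdot(q^d-q^{d-1})=q^{d(d-1)/2}(q^d-1)\cdot\ldots\cdot(q-1)$, noting that $d(d-1)/2-1=(d+1)(d-2)/2$.

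To compute the lowest $u$-term, I would exploit the strict superadditivity of $n\mapsto n(n+1)/2$, namely $(a+b)(a+b+1)/2-a(a+1)/2-b(b+1)/2=ab>0$. Writing $F=\sum_{d\ge0}\tfrac{u^{d(d+1)/2}t^d}{(1-q)\cdots(1-q^d)}$ and $G=T^{-1}(F^{-1})$, this superadditivity shows that in every product arising from inverting $F$, applying $T^{-1}$, and taking $\log G=\sum_k\tfrac{(-1)^{k-1}}{k}(G-1)^k$, the minimal $u$-degree of the $t^e$-coefficient is achieved \emph{uniquely} by splitting into parts of size $1$. A short computation gives $[t^1]G=\tfrac{u}{1-q}$, so the minimal $u$-term of $[t^e]\log G$ is $\tfrac{(-1)^{e-1}}{e(1-q)^e}u^e$, coming from $([t^1]G)^e$ with $k=e$. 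The crucial step is then applying ${\rm Log}=\Psi^{-1}\circ\log$, where in the two-variable setting $\Psi^{-1}$ substitutes $q\mapsto q^i$, $u\mapsto u^i$, $t\mapsto t^i$ with weight $\mu(i)/i$. For each divisor $i$ of $d$, the minimal $u$-term $u^{d/i}$ of $[t^{d/i}]\log G$ is sent to $u^{d}$, so all divisors contribute to the coefficient of $u^d$ in $[t^d]{\rm Log}(G)$; collecting them with $ij=d$ and $1/i\cdot 1/j=1/d$, and using $(1-q^i)^j=(-1)^j(q^i-1)^j$, yields
$$[u^d]\,a_d(q,u)=(1-q)\sum_{ij=d}\frac{\mu(i)}{i}\cdot\frac{(-1)^{j-1}}{j(1-q^i)^j}=\frac{q-1}{d}\sum_{ij=d}\frac{\mu(i)}{(q^i-1)^j}.$$

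Substituting this into the first displayed equation gives exactly the claimed formula for $\overline{a}_d(q,0)$. For the value at $q=1$ I would take the limit: each factor $q^l-1\to l(q-1)$, so $(q^d-1)\cdots(q-1)\sim d!\,(q-1)^d$, while in $\sum_{ij=d}\mu(i)(q^i-1)^{-j}$ the term $i=1$ has the unique maximal pole order $j=d$, namely $(q-1)^{-d}$, all other terms ($i\ge2$) having pole order $d/i<d$ and hence vanishing after multiplication by $(q-1)^d$. Since $q^{(d+1)(d-2)/2}\to1$, the product tends to $\tfrac1d\cdot d!=(d-1)!$. The main obstacle is the careful bookkeeping of the lowest $u$-power through the plethystic ${\rm Log}$: one must verify that the Adams/Möbius operation $\Psi^{-1}$ is precisely what promotes every divisor of $d$ into a contribution to the $u^d$-coefficient, and that the superadditivity of $n(n+1)/2$ guarantees no other compositions interfere, so that the Möbius sum emerges cleanly.
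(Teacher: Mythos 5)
Your proof is correct and follows essentially the same route as the paper: both extract the coefficient of $u^d$ in $a_d(q,u)$ from the plethystic definition and use ${\rm Log}=\Psi^{-1}\circ\log$ to produce the M\"obius sum over divisors of $d$. The paper merely packages your superadditivity bookkeeping more compactly, by substituting $z=ut$ and evaluating at $u=0$, which truncates the inner series and the twist $T^{-1}$ to their $t$-linear parts and reduces the computation to ${\rm Log}(1-z/(q-1))$.
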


\begin{proof} We rewrite identity (\ref{rewrite}) using the variable $z=ut$ as
$$\sum_{d\geq 1}(u^{-d}a_d(q,u))z^d=(1-q)\cdot{\rm Log}(T^{-1}(\sum_{d\geq 0}\frac{u^{d(d-1)/2}z^d}{(1-q)\cdot\ldots\cdot(1-q^d)})^{-1}).$$
Specialization to $u=0$ is well-defined on the right hand side, since
$$T^{-1}(z^d)=(-1)^d(q^{-1}u)^{d(d-1)/2}z^d,$$
and thus
$$T^{-1}(z)=-z\mbox{ and }T^{-1}(z^d)=0\mbox{ for }d\geq 2$$
at $u=0$. Thus we find the identity
$$\sum_{d\geq 1}(u^{-d}a_d(q,u))|_{u=0}z^d=(1-q)\cdot{\rm Log}(1-\frac{z}{q-1}).$$
Using the formula of Section \ref{pre} we compute
$${\rm Log}(1-\frac{z}{1-q})=-\Psi^{-1}\sum_{j\geq 1}\frac{z^j}{j(q-1)^j}=-\sum_{d\geq 1}\sum_{i\cdot j=d}\frac{\mu(i)}{(q^i-1)^j}\frac{z^d}{d}.$$
Thus
$$\overline{a}_d(q,0)=\frac{(q^d-1)\cdot\ldots\cdot(q^d-q^{d-1})}{(q-1)(u-1)(u-q)}u^{-d}a_d(q,u)|_{u=0}=$$
$$=\frac{(q^d-1)\cdot\ldots\cdot(q^d-q^{d-1})}{q(q-1)}(1-q)(-\frac{1}{d}\sum_{i\cdot j=d}\frac{\mu(i)}{(q^i-1)^j}),$$
and we arrive at the claimed formula.

\end{proof}


\begin{thebibliography}{10}
\bibitem{HR} T.~Hausel and F.~Rodr\'iguez-Villegas, \emph{Mixed Hodge polynomials of character varieties}, Invent.~Math.~{\bf 174} (2008), no.~3, 555--624.
\bibitem{HeR} G.~T.~Helleloid and F.~Rodr\'iguez-Villegas, \emph{Counting quiver representations over finite fields via graph enumeration}, J.~Algebra {\bf 322} (2009), no.~5, 1689--1704.
\bibitem{MR} S.~Mozgovoy and M.~Reineke, \emph{On the number of stable quiver representations over finite fields}, J. Pure Appl. Algebra {\bf 213} (2009), no.~4, 430--439.
\bibitem{R} M.~Reineke, \emph{Counting rational points of quiver moduli}, Int.~Math.~Res.~Not.~{\bf 2006}, Art.~ID 70456, 19 pp.
\end{thebibliography}
\end{document}